\theoremstyle{definition}
\newtheorem{theorem}{Theorem}[section]
\newtheorem{lemma}{Lemma}[section]
\numberwithin{equation}{section}
\numberwithin{figure}{section}
\title{Contrast-independent partially explicit time discretizations for multiscale flow problems. }
\author{Eric T. Chung \footnote{Department of Mathematics, The Chinese University of Hong Kong (CUHK), Hong Kong SAR}, ~Yalchin Efendiev\footnote{Department of Mathematics, Texas A\&M University, College Station, TX 77843, USA \& North-Eastern Federal University, Yakutsk, Russia}, ~ Wing Tat Leung\footnote{Department of Mathematics, University of California, Irvine, USA}, ~ Petr N. Vabishchevich\footnote{Nuclear Safety Institute, Russian Academy of Sciences, Moscow, Russia \& North-Eastern Federal University, Yakutsk, Russia}}
\begin{document}
\maketitle

\section*{Abstract}
Many multiscale problems have a high contrast, which is expressed as a
very large ratio between the media properties. The contrast is known
to introduce many challenges in the design of multiscale methods and 
domain decomposition approaches. These issues to some extend are analyzed 
in the design of spatial multiscale and domain decomposition approaches.
However, some of 
these issues remain open for time dependent problems as the contrast
affects the time scales, particularly, for explicit methods. 
For example, in parabolic equations, the time step is 
$dt=H^2/\kappa_{max}$, where $\kappa_{max}$ is the largest 
diffusivity.
In this
paper, we address this issue in the context of parabolic equation
by designing a splitting algorithm. The proposed splitting algorithm
treats dominant multiscale modes in the implicit fashion, while 
the rest in the explicit fashion. The unconditional stability of
these algorithms require a special multiscale space design, which is
the main purpose of the paper. We show that with an appropriate choice
of multiscale spaces we can achieve an unconditional stability with
respect to the contrast. This could provide 
computational savings as the time step in explicit methods is
adversely affected by the contrast. 
We discuss some theoretical aspects of the proposed algorithms.
Numerical results are presented.

\section{Introduction}

Many problems have multiple scales and high contrast. Examples include
flows in porous media, composite materials, and so on. In these problems,
one typically observes a large jump in media properties, which is usually
referred as a high contrast, where the contrast is the ratio between
largest and smallest media property values, e.g., diffusivity in
the case of isotropic diffusion in the media. These problems pose
challenges in numerical simulations. Some of these challenges 
in the context of spatial treatments have been addressed
(e.g., \cite{chung2018constraint, NLMC}).

 It is known that the high 
contrast requires special treatment in multiscale methods by
introducing additional multiscale basis functions 
\cite{chung2018constraint, NLMC}. 
The high contrast introduces challenges in temporal discretization,
in particular, for explicit methods. The high contrast in 
media properties introduces stiffness for the systems and requires
small time stepping, particularly, for explicit methods. 
For example, for the parabolic equation with the diffusion 
coefficients $\kappa(x)$, the time stepping for explicit methods
needs to be $H^2/\kappa_{max}$, where $\kappa_{max}$ is the largest
diffusion coefficient. To overcome this difficulty, we introduce
a splitting method, which splits the space and time in an appropriate
way. The resulting discretization's stability is independent of contrast.
This provides a computational savings since the contrast can be very large.

Next, we give some overview of multiscale methods, in particular,
their treatment of the contrast in the context of steady state
problems. 
Multiscale spatial
algorithms have been studied in the literature.
In previous findings, the algorithms, such as
homogenization-based approaches \cite{eh09,le2014msfem}, 
multiscale
finite element methods \cite{eh09,hw97,jennylt03}, 
generalized multiscale finite element methods (GMsFEM) \cite{chung2016adaptiveJCP,MixedGMsFEM,WaveGMsFEM,chung2018fast,GMsFEM13}, 
constraint energy minimizing GMsFEM (CEM-GMsFEM) 
\cite{chung2018constraint, chung2018constraintmixed}, nonlocal
multi-continua (NLMC) approaches \cite{NLMC},
metric-based upscaling \cite{oz06_1}, heterogeneous multiscale method 
\cite{ee03}, localized orthogonal decomposition (LOD) 
\cite{henning2012localized}, equation-free approaches \cite{rk07,skr06}, 
multiscale stochastic approaches \cite{hou2017exploring, hou2019model, hou2018adaptive},
and hierarchical multiscale method \cite{brown2013efficient},
are developed to address spatial heterogeneities.
For high-contrast problems, approaches such as GMsFEM and NLMC, are proposed.
As we mentioned earlier, in porous media applications, the spatial 
heterogeneities
are too complex and have high contrast. 
For this reason, for GMsFEM and related approaches
\cite{chung2018constraint}, multiple basis
functions or continua are designed to capture the multiscale features
due to high contrast
\cite{chung2018constraintmixed, NLMC}. 
These approaches require a careful design of multiscale
dominant modes. The contrast, as it is known, introduces a stiffness
in the dynamical systems. When treating explicitly, one needs to take
very small time steps. In this paper, we will propose an approach
that allows taking the time step to be independent of the contrast.


Our approaches take their origin in splitting algorithms
\cite{marchuk1990splitting,VabishchevichAdditive},
which are initially designed to split various physics.
For example, for 
convection-diffusion equations, these approaches are often used to 
split convection and diffusion.
In these cases, the operator is decomposed
based on physical processes.
In our recent works, we have proposed several approaches for temporal splitting
that uses multiscale spaces  \cite{efendiev2020splitting, efendiev_split2020}.
In  \cite{efendiev2020splitting}, a general framework is proposed where
the transition to simpler problems is carried out 
based on spatial decomposition of the solution.
In \cite{efendiev_split2020}, 
we combine the temporal splitting algorithms and spatial
multiscale methods. We divide the spatial space into various components
and use these subspaces in the temporal splitting. As a result, smaller
systems are inverted in each time step, which reduces the computational
cost. 
These algorithms are implicit, and we prove that they are unconditionally
stable. 
These approaches share some common concepts with IMEX methods
(e.g., \cite{ascher1997implicit}).
There are many approaches for treating multiscale 
stiff systems 
(e.g., \cite{li2008effectiveness,abdulle2012explicit,engquist2005heterogeneous,ariel2009multiscale}).
Our proposed approaches differ from these approaches Our goal is to use
splitting concepts and treat implicitly and explicitly some
parts of the solution in order to make the time step contrast 
independent.

In the paper, we introduce a special multiscale decomposition
and a temporal splitting, which provides a contrast-independent
time discretization for multiscale flow problems.
We consider a parabolic equation with multiscale and high contrast
coefficients. As in our previous CEM-GMsFEM approaches, we select 
dominant basis functions which capture important degrees of freedom
and it is known to give contrast-independent convergence that scales with
the mesh size. We design and
introduce an additional space in the complement space
and these degrees of freedom are treated explicitly.
In typical situations, one has very few degrees of freedom
in dominant basis functions that are treated implicitly.
Thus, the resulting implicit-explicit schemes has very small
implicit part. We show that with our specially designed spaces
the proposed temporal discretization is stable with the time stepping
that is independent of the contrast. We propose several choices for 
multiscale space decomposition. We note that a special decomposition
is needed to remove the contrast in the time stepping, which is
shown in this paper.

We remark several important observations. First, we note that the use
of additional degrees of freedom (basis functions
beyond CEM-GMsFEM basis functions) is needed for dynamic problems,
in general, to handle
missing information. This is even though CEM-GMsFEM can provide accurate
solution for some parabolic equations, the basis functions are computed
based on steady-state information and additional degrees of freedom
are needed to improve solution adaptively. Secondly, our approaches
share some similarities with online methods (e.g., \cite{chung2015residual}), 
where additional
basis functions are added and iterations are performed. The main
difference is that our focus is to find spaces that can provide the
time step to be independent of the contrast for explicit methods. Finally,
we note that restrictive time step (e.g., $dt=H^2$) scales as the coarse mesh
size and thus much coarser.

We present several representative numerical results. We compare
various methods and show that the proposed methods provide a good
approximation with the time step that is independent of the contrast.
We select examples where additional basis functions provide an improvement
by choosing ``singular'' source terms.

The paper is organized as follows. In the next section, we present
Preliminaries. In section 3, we present a general construction
of partially explicit methods. Section 4 is devoted to the construction
of multiscale spaces. We present numerical results in Section 5.
The conclusions are presented in Section 6.

\section{Preliminaries}

We consider the following problem. Find $u$ such that 
\begin{equation}
\label{eq:main}
u_{t}=\nabla\cdot(\kappa\nabla u)\;\text{in }\Omega,
\end{equation}
where $\kappa\in L^{\infty}(\Omega)$ is a high contrast parameter.
We can write the problem in the weak formulation: find $u(t,\cdot)\in V$
such that 
\begin{equation}
(u_{t},v)+a(u,v)=0\;\forall v\in V.
\label{eq:problem_weak}
\end{equation}
In our case $V = H^1_0(\Omega)$,
\[
 (u,v) = \int_{\Omega} u v ,
 \quad \|u\| = (u, u)^{1/2} . 
\]
The bilinear form $a(\cdot, \cdot)$ is given
\[
 a(u,v) = \int_{\Omega} \kappa \nabla u \cdot \nabla v ,
\] 
where $0 < \kappa_{min} \leq \kappa(x) \leq \kappa_{max}, \ x \in \Omega$. 
For energy norm, we have $\|u\|_a = a(u,u)^{1/2}$.

Cauchy problem consists of finding
 $w(t)$ in  $\mathcal{V}$ and $0 < t \leq T$, such that
\begin{equation}\label{2.1}
 \frac{d}{d t} (w(t), v) + a(w,v) = 0 \quad \forall v \in \mathcal{V} , 
 \quad 0 < t \leq T , 
\end{equation} 
and initial condition
\begin{equation}\label{2.2}
 w(0) = w^0 .
\end{equation}

Semi-discretization in space of 
$u(t) \in V$, where $V$ is a finite dimensional subspace of
 $\mathcal{V}$ ($V \subset \mathcal{V}$),
such that
\begin{equation}\label{2.3}
 \frac{d}{d t} (u(t), v) + a(u,v) = 0 \quad \forall v \in V, 
 \quad 0 < t \leq T , 
\end{equation} 
\begin{equation}\label{2.4}
 u(0) = u^0 .
\end{equation}
Taking
$v = d u /dt$ in (\ref{2.3}) we get
\begin{equation}\label{2.5}
 \|u(t)\|_a \leq \|u^0\|_a ,
 \quad 0 < t \leq T . 
\end{equation} 

For simplicity, we consider a fixed time step,  $\tau$ and
$t^n = n \tau, \ n =  0, \ldots, N, \ N \tau = T$, $u^n = u(t^n)$.
It is known that 
(e.g., \cite{SamarskiiTheory,SamarskiiMatusVabischevich2002}), 
in the class of two-level schemes, implicit methods (backward Euler)
is unconditionally stable, and a forward method (forward Euler) conditionally 
stable. 

When using implicit method, 
 $u_H^{n+1} \approx u(t^n)$ ($H$ is the coarse mesh size)
\begin{equation}\label{2.6}
 \left ( \frac{u_H^{n+1}  - u_H^{n}}{\tau }, v \right ) + a(u_H^{n+1} ,v) = 0 
\quad \forall v \in V, 
 \quad n = 0, \ldots, N-1.
\end{equation} 
We take in (\ref{2.6}), $v = 2(u_H^{n+1} -u_H^{n})$ 
and into account the symmetry in the bilinear form
$a(\cdot, \cdot)$, we have
\[
 \frac{2}{\tau } \|u_H^{n+1} - u_H^{n+1}\|^2 + \|u_H^{n+1} - u_H^{n+1}\|_a^2 + \|u_H^{n+1}\|_a^2  - \|u_H^{n}\|_a^2 = 0 .
\] 
Thus,
\begin{equation}\label{2.8}
 \|u_H^{n}\|_a \leq \|u_H^0\|_a , 
 \quad n = 1, \ldots, N,  
\end{equation}
which is a discrete version of (\ref{2.5}).
The estimate
(\ref{2.8}) guarantees the unconditional stability.

The stability condition for the explicit scheme
\begin{equation}\label{2.9}
 \left ( \frac{u_H^{n+1}  - u_H^{n}}{\tau }, v \right ) + a(u_H^{n} ,v) = 0 \quad \forall v \in V, 
 \quad n = 0, \ldots, N-1
\end{equation} 
is carried out by 
taking into account
$v = 2(u_H^{n+1} -u_H^{n})$  in (\ref{2.9})
and after some manipulations, we have
\[
 \frac{2}{\tau } \|u_H^{n+1} - u_H^{n}\|^2 - \|u_H^{n+1} - u_H^{n}\|_a^2 + \|u_H^{n+1}\|_a^2  - \|u_H^{n}\|_a^2 = 0.
\]    
Thus, the  stability
(the estimate (\ref{2.8})) will take the place if 
\begin{equation}\label{2.10}
 \|v\|^2 \geq \frac{\tau}{2} \|v\|_a^2 \quad \forall v \in V .
\end{equation} 

\section{Partially Explicit Temporal Splitting Scheme}

In this section, we first introduce a general partial splitting
algorithm
for  $u_{H}$ of problem (\ref{eq:main})
defined as 
\begin{align*}
(u_{H,t},v) & +a(u_{H},v)=0\;\forall v\in V_{H},
\end{align*}
where $V_{H}$ is a coarse grid finite element space. We consider
$V_{H}$ can be decomposed into two subspaces $V_{H,1}$ and $V_{H,2}$,
namely, 
\[
V_{H}=V_{H,1}+ V_{H,2}.
\]

We will use a time discretization scheme: 
finding $\{u_{H,1}^{n}\}_{n=1}^{N}\in V_{1,H},\;\{u_{H,2}^{n}\}_{n=1}^{N}\in V_{H,2}$
\begin{equation}
\begin{split}
(u_{H,1}^{n+1}-u_{H,1}^{n},v)+\mu(u_{H,2}^{n+1}-u_{H,2}^{n},v)+
(1-\mu)(u_{H,1}^{n}-u_{H,1}^{n-1},v)=\\
-\tau a(u_{H,1}^{n+1}+u_{H,2}^{n},v),\;v\in V_{1,H}\\
(u_{H,2}^{n+1}-u_{H,2}^{n},v) +\mu(u_{H,1}^{n+1}-u_{H,1}^{n},v)+
(1-\mu)(u_{H,2}^{n}-u_{H,2}^{n-1},v)=\\
-\tau a((1-\omega)u_{H,1}^{n}+\omega u_{H,1}^{n+1}+u_{H,2}^{n},v),\;v\in V_{2,H}.
\end{split}
\end{equation}
Here, we will consider some options for $\mu$ and $\omega$ in $[0,1]$ and
spaces $V_{H,1}$ and $V_{H,2}$.
We note that if $\mu=0$ and $\omega=0$, 
the second equation does not require $u_{H,1}^{n+1}$ and is totally decoupled.
When  $\mu=0$ and $\omega=1$,  the equations can be solved sequentially 
(the second equation
is solved after solving the first equation).
When  $\mu=1$,  equations need to be solved together at each new time step.

As a first case, we briefly consider a case $\mu=1$ and $\omega=1$
and 
$V_{H,1}$ and $V_{H,2}$ as two orthogonal spaces such that
\[
 (v_1, v_2) = 0 \quad \forall v_1 \in V_1, \ \forall v_2 \in V_2 .
\] 
The scheme  can be simplified as
\begin{equation}\label{3.2}
\begin{split}
 \left ( \frac{u_{H,1}^{n+1}  - u_{H,1}^{n}}{\tau }, v_1 \right ) & + a(u_{H,1}^{n+1}+u_{H,2}^{n} ,v_1) = 0 \quad \forall v_1 \in V_1, \\
 \left ( \frac{u_{H,2}^{n+1}  - u_{H,2}^{n}}{\tau }, v_2 \right ) & + a(u_{H,1}^{n+1}+u_{H,2}^{n} ,v_2) = 0 \quad \forall v_2 \in V_2, \\
 & \quad n = 0, \ldots, N-1 . 
\end{split} 
\end{equation} 
Initial conditions are mapped in corresponding spaces accordingly.

\begin{theorem}
The partial explicit scheme
(\ref{3.2}) is stable if 
\begin{equation}\label{3.4}
 \|v_2\|^2 \geq \frac{\tau}{2} \|v_2\|_a^2  \quad \forall v_2 \in V_2 .
\end{equation}
Under these conditions, we have
\begin{equation}\label{3.5}
 \|u_{H}^n\|_a \leq \|u_H^0\|_a , 
 \quad u_{H}^n = u_{H,1}^n+u_{H,2}^n ,
 \quad n = 1, \ldots, N . 
\end{equation} 
\end{theorem}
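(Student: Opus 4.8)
The plan is to extend the discrete energy arguments already used for the fully implicit scheme in (2.6)--(2.8) and the fully explicit scheme in (2.9)--(2.10) to the coupled pair (3.2). I would test each line of (3.2) with twice the increment of its own unknown: take $v_1 = 2(u_{H,1}^{n+1} - u_{H,1}^n)$ in the first equation and $v_2 = 2(u_{H,2}^{n+1} - u_{H,2}^n)$ in the second. Writing $\delta_1 = u_{H,1}^{n+1}-u_{H,1}^n \in V_1$, $\delta_2 = u_{H,2}^{n+1}-u_{H,2}^n \in V_2$, and noting that both lines of (3.2) share the common bilinear-form argument $w = u_{H,1}^{n+1} + u_{H,2}^n$, each $L^2$ term collapses to $\tfrac{2}{\tau}\|\delta_i\|^2$, and the two tested equations sum to
\[
\frac{2}{\tau}\|\delta_1\|^2 + \frac{2}{\tau}\|\delta_2\|^2 + 2a(w, \delta_1 + \delta_2) = 0 ,
\]
where $\delta_1 + \delta_2 = u_H^{n+1} - u_H^n$ and $u_H^n = u_{H,1}^n + u_{H,2}^n$.

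The heart of the proof is to evaluate the cross term $2a(w, u_H^{n+1}-u_H^n)$. The key observation is that the mixed argument splits as $w = \tfrac12(u_H^{n+1}+u_H^n) + \tfrac12(\delta_1 - \delta_2)$. Substituting this and using the symmetry of $a(\cdot,\cdot)$ together with the elementary identities $a(u_H^{n+1}+u_H^n,\, u_H^{n+1}-u_H^n) = \|u_H^{n+1}\|_a^2 - \|u_H^n\|_a^2$ and $a(\delta_1-\delta_2,\, \delta_1+\delta_2) = \|\delta_1\|_a^2 - \|\delta_2\|_a^2$, I obtain
\[
2a(w,\, u_H^{n+1}-u_H^n) = \|u_H^{n+1}\|_a^2 - \|u_H^n\|_a^2 + \|\delta_1\|_a^2 - \|\delta_2\|_a^2 .
\]
Feeding this back into the summed identity yields the exact energy balance
\[
\|u_H^{n+1}\|_a^2 - \|u_H^n\|_a^2 = -\frac{2}{\tau}\|\delta_1\|^2 - \|\delta_1\|_a^2 - \frac{2}{\tau}\|\delta_2\|^2 + \|\delta_2\|_a^2 .
\]

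The final step is a sign inspection. The terms attached to the implicitly treated increment, $-\tfrac{2}{\tau}\|\delta_1\|^2 - \|\delta_1\|_a^2$, are manifestly nonpositive, reflecting the unconditional stability of the implicit part on $V_1$. The terms from the explicitly treated increment, $-\tfrac{2}{\tau}\|\delta_2\|^2 + \|\delta_2\|_a^2$, are nonpositive precisely when $\|\delta_2\|^2 \geq \tfrac{\tau}{2}\|\delta_2\|_a^2$; since $\delta_2 \in V_2$, this is exactly hypothesis (3.4). Hence $\|u_H^{n+1}\|_a \leq \|u_H^n\|_a$, and iterating over $n$ gives (3.5). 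I expect the main obstacle to be the mixed-argument identity: one must recognize that $w$ decomposes into the time average plus the signed half-difference $\tfrac12(\delta_1-\delta_2)$, which is exactly what separates the favorable implicit contribution $+\|\delta_1\|_a^2$ from the unfavorable explicit contribution $-\|\delta_2\|_a^2$. Getting these signs right, and assigning each increment to the correct test function so that the $L^2$ terms remain dissipative, is the delicate bookkeeping on which the whole estimate hinges.
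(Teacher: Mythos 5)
Your proof is correct and follows essentially the same route as the paper: test each equation of (\ref{3.2}) with twice its own increment, split the common argument $u_{H,1}^{n+1}+u_{H,2}^{n}$ as $\tfrac12(u_H^{n+1}+u_H^n)+\tfrac12(\delta_1-\delta_2)$, and absorb the resulting $+\|\delta_2\|_a^2$ term using hypothesis (\ref{3.4}). The energy identity you derive is exactly the one in the paper's proof, so there is nothing to add.
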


\begin{proof}
We have the following identity
\[
 u_{H,1}^{n+1}+u_{H,1}^{n} = \frac{1}{2} (u_{H}^{n+1}+u_{H}^{n}) + 
\frac{1}{2} (u_{H,1}^{n+1}-u_{H,1}^{n}) 
 - \frac{1}{2} (u_{H,2}^{n+1}-u_{H,2}^{n}) .
\]    
We take in (\ref{3.2}) 
\[
 v_1 = 2 (u_{H,1}^{n+1}-u_{H,1}^{n}),
 \quad v_2 = 2 (u_{H,2}^{n+1}-u_{H,2}^{n}) .
\] 
Summing two equations, we have
\begin{equation}
\begin{split}
 \frac{2}{\tau} \|u_{H,1}^{n+1}-u_{H,1}^{n}\|^2 & +  \frac{2}{\tau} \|u_{H,2}^{n+1}-u_{H,2}^{n}\|^2 + a(u_H^{n+1}+u_H^{n},u_H^{n+1}-u_H^{n}) \\
 & + \|u_{H,1}^{n+1}-u_{H,1}^{n}\|_a^2 - \|u_{H,2}^{n+1}-u_{H,2}^{n}\|_a^2 = 0.
\end{split}
\end{equation}
If
(\ref{3.4})
holds,
we get the estimate
(\ref{3.5}).
Note that the main finding consists of the constraint on the time step that
is due to the explicit part of the scheme.
\end{proof}

Next, we assume that the
spaces $V_{H,1}$ and $V_{H,2}$ are not necessarily orthogonal
and take $\mu=0$.
We thus obtain the following time discretization scheme: finding $\{u_{H,1}^{n}\}_{n=1}^{N}\in V_{1,H},\;\{u_{H,2}^{n}\}_{n=1}^{N}\in V_{H,2}$
\begin{align}
(u_{H,1}^{n+1},v) & =(u_{H,1}^{n},v)-(u_{H,2}^{n}-u_{H,2}^{n-1},v)-\tau a(u_{H,1}^{n+1}+u_{H,2}^{n},v),\;v\in V_{1,H} \label{eq:sch1} \\
(u_{H,2}^{n+1},v) & =(u_{H,2}^{n},v)-(u_{H,1}^{n}-u_{H,1}^{n-1},v)-\tau a((1-\omega)u_{H,1}^{n}+\omega u_{H,1}^{n+1}+u_{H,2}^{n},v),\;v\in V_{2,H}. \label{eq:sch2}
\end{align}
The numerical solution $u_{H}\in V_{H}$ is the sum of $u_{H,1}$
and $u_{H,2}$, $u_{H}=u_{H,1}+u_{H,2}$, as before.

Now, we will prove stability of the scheme (\ref{eq:sch1})-(\ref{eq:sch2}). 
To do so, we recall the strengthened Cauchy Schwarz inequality \cite{aldaz2013strengthened}. Let $S_1$ and $S_2$ be finite dimensional spaces with $S_1\cap S_2 = \{0\}$. Then
there is a constant $0< \beta_0 < 1$ such that
\begin{equation*}
(s_1,s_2) \leq \beta_0 \|s_1\| \|s_2\|
\end{equation*} 
where $\beta_0$ depends on $S_1$ and $S_2$. 
So, there is a constant $\gamma$, depending on $V_{H,1}$ and $V_{H,2}$, such that
\begin{equation}
\label{eq:gamma}
\gamma:=\sup_{v_{1}\in V_{H,1},v_{2}\in V_{H,2}}\cfrac{(v_{1},v_{2})}{\|v_{1}\| \|v_{2}\|}<1.
\end{equation}


\begin{theorem}
\label{thm32}
The partially explicit scheme (\ref{eq:sch1})-(\ref{eq:sch2}) is stable if 
\begin{equation}
\tau\sup_{v\in V_{H,2}}\cfrac{\|v\|_{a}^{2}}{\|v\|^{2}}\leq\cfrac{1-\gamma^{2}}{(2-\omega)}.\label{eq:stab_cond}
\end{equation}
where $\gamma$ is defined in (\ref{eq:gamma}). 
Moreover, we have the following stability estimate
\[
\cfrac{\gamma^{2}}{2}\sum_{i=1,2}\|u_{H,i}^{n+1}-u_{H,i}^{n}\|^{2}+\cfrac{\tau}{2}\|u_{H}^{n+1}\|_{a}^{2}\leq\cfrac{\gamma^{2}}{2}\sum_{i=1,2}\|u_{H,i}^{n}-u_{H,i}^{n-1}\|^{2}+\cfrac{\tau}{2}\|u_{H}^{n}\|_{a}^{2},\;\text{for }n\geq1.
\]
\end{theorem}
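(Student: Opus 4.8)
The plan is to run a discrete energy argument, testing each equation of the scheme against its own time increment and exploiting the symmetry of $a(\cdot,\cdot)$. Throughout, write $\delta_i^{n+1}=u_{H,i}^{n+1}-u_{H,i}^{n}$ for $i=1,2$, so that $u_H^{n+1}-u_H^n=\delta_1^{n+1}+\delta_2^{n+1}$. First I would take $v=\delta_1^{n+1}\in V_{H,1}$ in (\ref{eq:sch1}) and $v=\delta_2^{n+1}\in V_{H,2}$ in (\ref{eq:sch2}) and add the two resulting identities. On the left this produces $\|\delta_1^{n+1}\|^2+\|\delta_2^{n+1}\|^2$ together with the two mixed-time terms $(\delta_2^n,\delta_1^{n+1})+(\delta_1^n,\delta_2^{n+1})$. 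The crucial algebraic simplification is that the implicit/explicit arguments collapse to $u_{H,1}^{n+1}+u_{H,2}^n=u_H^n+\delta_1^{n+1}$ and $(1-\omega)u_{H,1}^n+\omega u_{H,1}^{n+1}+u_{H,2}^n=u_H^n+\omega\delta_1^{n+1}$, so the entire right-hand side becomes $-\tau\bigl[a(u_H^n,\delta_1^{n+1}+\delta_2^{n+1})+\|\delta_1^{n+1}\|_a^2+\omega\,a(\delta_1^{n+1},\delta_2^{n+1})\bigr]$.

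Next I would extract the telescoping energy. Using the polarization identity $a(u_H^n,u_H^{n+1}-u_H^n)=\tfrac12(\|u_H^{n+1}\|_a^2-\|u_H^n\|_a^2)-\tfrac12\|u_H^{n+1}-u_H^n\|_a^2$, the coupling term $a(u_H^n,\delta_1^{n+1}+\delta_2^{n+1})$ produces exactly the difference $\tfrac12(\|u_H^{n+1}\|_a^2-\|u_H^n\|_a^2)$ appearing in the claimed estimate, plus leftover energy-norm terms in the increments. Collecting those leftovers yields $-\tfrac{\tau}{2}\|\delta_1^{n+1}\|_a^2+\tau(1-\omega)a(\delta_1^{n+1},\delta_2^{n+1})+\tfrac{\tau}{2}\|\delta_2^{n+1}\|_a^2$; completing the square in the energy inner product bounds this above by $\tfrac{\tau}{2}\bigl[(1-\omega)^2+1\bigr]\|\delta_2^{n+1}\|_a^2$, and since $\omega(1-\omega)\ge0$ on $[0,1]$ one has $(1-\omega)^2+1\le 2-\omega$. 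The stability hypothesis (\ref{eq:stab_cond}) then converts this into the purely $L^2$ bound $\tfrac{1-\gamma^2}{2}\|\delta_2^{n+1}\|^2$.

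The remaining work is to dispose of the mixed-time $L^2$ inner products. Here I would invoke the strengthened Cauchy--Schwarz inequality (\ref{eq:gamma}) across $V_{H,1}$ and $V_{H,2}$, which gives $|(\delta_2^n,\delta_1^{n+1})|\le\gamma\|\delta_1^{n+1}\|\,\|\delta_2^n\|$ and $|(\delta_1^n,\delta_2^{n+1})|\le\gamma\|\delta_1^n\|\,\|\delta_2^{n+1}\|$. Assembling everything into $E^{n+1}-E^n$, where $E^{n}=\tfrac{\gamma^2}{2}\sum_i\|\delta_i^{n}\|^2+\tfrac{\tau}{2}\|u_H^{n}\|_a^2$, leaves a purely quadratic $L^2$ expression, and the goal is to show its sum is nonpositive so that $E^{n+1}\le E^n$, which is precisely the asserted estimate.

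The main obstacle is exactly this last step: a naive symmetric Young splitting of the cross terms leaves spurious positive contributions from $\|\delta_i^n\|^2$ and fails. Instead I would split the surviving expression into two independent quadratic forms, one in $(\|\delta_1^{n+1}\|,\|\delta_2^n\|)$ with matrix $\bigl(\begin{smallmatrix}\gamma^2/2-1 & \gamma/2\\ \gamma/2 & -\gamma^2/2\end{smallmatrix}\bigr)$ and one in $(\|\delta_2^{n+1}\|,\|\delta_1^n\|)$ with matrix $\bigl(\begin{smallmatrix}-1/2 & \gamma/2\\ \gamma/2 & -\gamma^2/2\end{smallmatrix}\bigr)$, the coefficient $-\tfrac12$ in the second form arising because the $\tfrac{1-\gamma^2}{2}$ term from the energy estimate combines with the coefficient $\tfrac{\gamma^2}{2}-1$ of $\|\delta_2^{n+1}\|^2$. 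Each matrix has nonpositive diagonal, and the determinants are $\tfrac{\gamma^2}{4}(1-\gamma^2)\ge0$ and $0$ respectively, so both forms are negative semidefinite precisely because $\gamma<1$; hence $E^{n+1}\le E^n$, completing the proof. The delicate point is identifying this particular pairing of increments (rather than a symmetric split), which is what makes the $\gamma<1$ gap do the work.
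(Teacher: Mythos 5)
Your proof is correct and follows essentially the same route as the paper's: test each equation against its own time increment, control the mixed-time $L^2$ terms with the strengthened Cauchy--Schwarz constant $\gamma$ (your two ``negative semidefinite quadratic forms'' are exactly the paper's asymmetric Young split $\gamma s t\le\tfrac12 s^{2}+\tfrac{\gamma^{2}}{2}t^{2}$ with the new increment paired against the old one, which is what lets the $\tfrac{\gamma^{2}}{2}\|u_{H,i}^{n}-u_{H,i}^{n-1}\|^{2}$ terms telescope), and absorb the surviving $\tfrac{\tau}{2}(2-\omega)\|u_{H,2}^{n+1}-u_{H,2}^{n}\|_{a}^{2}$ using the stability condition (\ref{eq:stab_cond}). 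Your simplification of the right-hand side via $u_{H,1}^{n+1}+u_{H,2}^{n}=u_{H}^{n}+(u_{H,1}^{n+1}-u_{H,1}^{n})$ is tidier than the paper's $\omega$/$(1-\omega)$ decomposition, but it arrives at the same bound and the argument is otherwise the same.
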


\begin{proof}
By (\ref{eq:sch1}) and (\ref{eq:sch2}), we have 
\begin{align}
(u_{H,1}^{n+1}-u_{H,1}^{n}+u_{H,2}^{n}-u_{H,2}^{n-1},v) & =-\tau a(u_{H,1}^{n+1}+u_{H,2}^{n},v),\;v\in V_{1,H}\label{eq:sch3}\\
(u_{H,2}^{n+1}-u_{H,2}^{n}+u_{H,1}^{n}-u_{H,1}^{n-1},v) & =-\tau a((1-\omega)u_{H,1}^{n}+\omega u_{H,1}^{n+1}+u_{H,2}^{n},v),\;v\in V_{2,H}.\label{eq:sch4}
\end{align}
Taking $v=u_{H,1}^{n+1}-u_{H,1}^{n}$ in (\ref{eq:sch3}) and $v=u_{H,2}^{n+1}-u_{H,2}^{n}$
in (\ref{eq:sch4}), we obtain 
\begin{align}
(u_{H,1}^{n+1}-u_{H,1}^{n}+u_{H,2}^{n}-u_{H,2}^{n-1},u_{H,1}^{n+1}-u_{H,1}^{n}) & =-\tau a(u_{H,1}^{n+1}+u_{H,2}^{n},u_{H,1}^{n+1}-u_{H,1}^{n}),\label{eq:sch5}\\
(u_{H,2}^{n+1}-u_{H,2}^{n}+u_{H,1}^{n}-u_{H,1}^{n-1},u_{H,2}^{n+1}-u_{H,2}^{n}) & =-\tau a((1-\omega)u_{H,1}^{n}+\omega u_{H,1}^{n+1}+u_{H,2}^{n},u_{H,2}^{n+1}-u_{H,2}^{n}).\label{eq:sch6}
\end{align}
The left hand side of the equation (\ref{eq:sch5}) can be estimated
in the following way 
\begin{align*}
(u_{H,1}^{n+1}-u_{H,1}^{n}+u_{H,2}^{n}-u_{H,2}^{n-1},u_{H,1}^{n+1}-u_{H,1}^{n}) & =\|u_{H,1}^{n+1}-u_{H,1}^{n}\|^{2}+(u_{H,2}^{n}-u_{H,2}^{n-1},u_{H,1}^{n+1}-u_{H,1}^{n})\\
 & \geq\|u_{H,1}^{n+1}-u_{H,1}^{n}\|^{2}-\gamma\|u_{H,2}^{n}-u_{H,2}^{n-1}\|\|u_{H,1}^{n+1}-u_{H,1}^{n}\|\\
 & \geq\cfrac{1}{2}\|u_{H,1}^{n+1}-u_{H,1}^{n}\|^{2}-\cfrac{\gamma^{2}}{2}\|u_{H,2}^{n}-u_{H,2}^{n-1}\|^{2}.
\end{align*}
Similarly, the left hand side of the equation (\ref{eq:sch6}) can be estimated as follows
\begin{align*}
(u_{H,2}^{n+1}-u_{H,2}^{n}+u_{H,1}^{n}-u_{H,1}^{n-1},u_{H,2}^{n+1}-u_{H,2}^{n}) & \geq\cfrac{1}{2}\|u_{H,2}^{n+1}-u_{H,2}^{n}\|^{2}-\cfrac{\gamma^{2}}{2}\|u_{H,1}^{n}-u_{H,1}^{n-1}\|^{2}.
\end{align*}
To compute the sum of the right hand sides of (\ref{eq:sch5}) and
(\ref{eq:sch6}), we notice that 
\begin{align*}
 & -a(u_{H,1}^{n+1}+u_{H,2}^{n},u_{H,1}^{n+1}-u_{H,1}^{n})-a((1-\omega)u_{H,1}^{n}+\omega u_{H,1}^{n+1}+u_{H,2}^{n},u_{H,2}^{n+1}-u_{H,2}^{n})\\
= & -\omega a(u_{H,1}^{n+1}+u_{H,2}^{n},u_{H}^{n+1}-u_{H}^{n})-(1-\omega)\Big(a(u_{H,1}^{n+1},u_{H,1}^{n+1}-u_{H,1}^{n})+a(u_{H,2}^{n},u_{H,2}^{n+1}-u_{H,2}^{n})\Big)\\
 & -(1-\omega)\Big(a(u_{H,1}^{n},u_{H,2}^{n+1}-u_{H,2}^{n})+a(u_{H,2}^{n},u_{H,1}^{n+1}-u_{H,1}^{n})\Big).
\end{align*}
We will first estimate the term $-\omega a(u_{H,1}^{n+1}+u_{H,2}^{n},u_{H}^{n+1}-u_{H}^{n})$ as follows:
\begin{equation}
\label{eq:sch7}
-a(u_{H,1}^{n+1}+u_{H,2}^{n},u_{H}^{n+1}-u_{H}^{n})=-a(u_{H}^{n+1},u_{H}^{n+1}-u_{H}^{n})+a(u_{H,2}^{n+1}-u_{H,2}^{n},u_{H}^{n+1}-u_{H}^{n}).
\end{equation}
We then have 
\[
-a(u_{H}^{n+1},u_{H}^{n+1}-u_{H}^{n})=\cfrac{1}{2}\Big(\|u_{H}^{n}\|_{a}^{2}-\|u_{H}^{n+1}-u_{H}^{n}\|_{a}^{2}-\|u_{H}^{n+1}\|_{a}^{2}\Big)
\]
and
\[
a(u_{H,2}^{n+1}-u_{H,2}^{n},u_{H}^{n+1}-u_{H}^{n})\leq\cfrac{1}{2}\|u_{H}^{n+1}-u_{H}^{n}\|_{a}^{2}+\cfrac{1}{2}\|u_{H,2}^{n+1}-u_{H,2}^{n}\|_{a}^{2}
\]

\[
-\tau\omega a(u_{H,1}^{n+1}+u_{H,2}^{n},u_{H}^{n+1}-u_{H}^{n})\leq\cfrac{\tau\omega}{2}\Big(\|u_{H}^{n}\|_{a}^{2}-\|u_{H}^{n+1}\|_{a}^{2}+\|u_{H,2}^{n+1}-u_{H,2}^{n}\|_{a}^{2}\Big).
\]
Next, we have 
\[
-\tau(1-\omega)a(u_{H,1}^{n+1},u_{H,1}^{n+1}-u_{H,1}^{n})=\cfrac{\tau(1-\omega)}{2}\Big(\|u_{H,1}^{n}\|_{a}^{2}-\|u_{H,1}^{n+1}-u_{H,1}^{n}\|_{a}^{2}-\|u_{H,1}^{n+1}\|_{a}^{2}\Big)
\]
and

\[
-\tau(1-\omega)a(u_{H,2}^{n},u_{H,2}^{n+1}-u_{H,2}^{n})=\cfrac{\tau(1-\omega)}{2}\Big(\|u_{H,2}^{n}\|_{a}^{2}+\|u_{H,2}^{n+1}-u_{H,2}^{n}\|_{a}^{2}-\|u_{H,2}^{n+1}\|_{a}^{2}\Big).
\]
So, the right hand side of (\ref{eq:sch7}) becomes
\begin{align*}
 & -a(u_{H,1}^{n},u_{H,2}^{n+1}-u_{H,2}^{n})-a(u_{H,2}^{n},u_{H,1}^{n+1}-u_{H,1}^{n})\\
= & -a(u_{H,1}^{n},u_{H,2}^{n+1})-a(u_{H,2}^{n},u_{H,1}^{n+1})+2a(u_{H,2}^{n},u_{H,1}^{n})\\
= & a(u_{H,1}^{n+1}-u_{H,1}^{n},u_{H,2}^{n+1}-u_{H,2}^{n})-a(u_{H,1}^{n+1},u_{H,2}^{n+1})+a(u_{H,2}^{n},u_{H,1}^{n}).
\end{align*}
Note that,
\begin{align*}
a(u_{H,1}^{n+1}-u_{H,1}^{n},u_{H,2}^{n+1}-u_{H,2}^{n}) & \leq\|u_{H,1}^{n+1}-u_{H,1}^{n}\|_{a}\|u_{H,2}^{n+1}-u_{H,2}^{n}\|_{a}\\
 & \leq\cfrac{1}{2}\|u_{H,1}^{n+1}-u_{H,1}^{n}\|_{a}^{2}+\cfrac{1}{2}\|u_{H,2}^{n+1}-u_{H,2}^{n}\|_{a}^{2}
\end{align*}
Hence, we have 
\begin{align*}
 & -\tau(1-\omega)\Big(a(u_{H,1}^{n+1},u_{H,1}^{n+1}-u_{H,1}^{n})+a(u_{H,2}^{n},u_{H,2}^{n+1}-u_{H,2}^{n})+a(u_{H,1}^{n},u_{H,2}^{n+1}-u_{H,2}^{n})+a(u_{H,2}^{n},u_{H,1}^{n+1}-u_{H,1}^{n})\Big)\\
\leq & \cfrac{\tau(1-\omega)}{2}\Big(\|u_{H,1}^{n}\|_{a}^{2}+\|u_{H,2}^{n}\|_{a}^{2}-\|u_{H,1}^{n+1}\|_{a}^{2}-\|u_{H,2}^{n+1}\|_{a}^{2}+2\|u_{H,2}^{n+1}-u_{H,2}^{n}\|_{a}^{2}-2a(u_{H,1}^{n+1},u_{H,2}^{n+1})+2a(u_{H,2}^{n},u_{H,1}^{n})\Big)\\
= & \cfrac{\tau(1-\omega)}{2}\Big(\|u_{H}^{n}\|_{a}^{2}-\|u_{H}^{n+1}\|_{a}^{2}+2\|u_{H,2}^{n+1}-u_{H,2}^{n}\|_{a}^{2}\Big).
\end{align*}
Combining the above results, 
\begin{align*}
 & \cfrac{\gamma^{2}}{2}\sum_{i=1,2}\|u_{H,i}^{n+1}-u_{H,i}^{n}\|_{L^{2}}^{2}+\cfrac{1-\gamma^{2}}{2}\sum_{i=1,2}\|u_{H,i}^{n+1}-u_{H,i}^{n}\|_{L^{2}}^{2}+\cfrac{\tau}{2}\|u_{H}^{n+1}\|_{a}^{2}\\
\leq & \cfrac{\gamma^{2}}{2}\sum_{i=1,2}\|u_{H,i}^{n}-u_{H,i}^{n-1}\|_{L^{2}}^{2}+\cfrac{\tau}{2}(2-\omega)\|u_{H,2}^{n+1}-u_{H,2}^{n}\|_{a}^{2}+\cfrac{\tau}{2}\|u_{H}^{n}\|_{a}^{2}+\cfrac{\tau(1-\omega)}{2}\|u_{H,2}^{n}\|_{a}^{2}.
\end{align*}
Using the stability condition (\ref{eq:stab_cond}), 
we have 
\[
\cfrac{\gamma^{2}}{2}\sum_{i=1,2}\|u_{H,i}^{n+1}-u_{H,i}^{n}\|^{2}+\cfrac{\tau}{2}\|u_{H}^{n+1}\|_{a}^{2}\leq\cfrac{\gamma^{2}}{2}\sum_{i=1,2}\|u_{H,i}^{n}-u_{H,i}^{n-1}\|^{2}+\cfrac{\tau}{2}\|u_{H}^{n}\|_{a}^{2}.
\]
\end{proof}

We present a generalized version of the above theorem. We further assume that the space $V_{H,2}$ can be decomposed as
\begin{equation}
\label{eq:V2decomp}
V_{H,2} = \sum_{j=1}^{J} V_{H,2,j}.
\end{equation}
By using the strengthened Cauchy-Schwarz inequality, there is a constant $0<\beta_{j,m}<1$ such that
\begin{equation}
\beta_{j,m} := \sup_{v_j\in V_{H,2,j} , v_m\in V_{H,2,m}} \cfrac{(v_j,v_m)}{\|v_j\| \|v_m\|}, \quad \forall j,m=1,\cdots, J, \; j\ne m.
\end{equation}
Using this, we have for any $j,m$ with $j\ne m$,
\begin{equation}
\| v_j \|^2 + \|v_m\|^2 \leq \cfrac{1}{1-\beta^2_{j,m}} \| v_j + v_m\|^2.
\end{equation}
Let $\beta = \max \beta_{j,m}$. Then we have 
\begin{equation}
\label{eq:CS}
\sum_{j=1}^J \|v_j\|^2 \leq (\cfrac{1}{1-\beta^2})^\ell \|v\|^2, \quad v = \sum_{j=1}^J v_j
\end{equation}
where $\ell$ is the smallest integer greater than or equal to $\log_2 J$.

\begin{lemma}
\label{lem:overlap}
Assume that $V_{H,2}$ has the decomposition defined in (\ref{eq:V2decomp}). Then we have
\[
\sup_{v\in V_{H,2}}\cfrac{\|v\|^2_{a}}{\|v\|^2} \leq J^2(1-\beta^2)^{-\ell} \sup_{1\leq j\leq J} \sup_{v_j\in V_{H,2,j}}\cfrac{\|v_j\|^2_{a}}{\|v_j\|^2}
\]
where $\ell$ is the smallest integer greater than or equal to $\log_2 J$.
\end{lemma}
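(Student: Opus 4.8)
The plan is to reduce the global Rayleigh quotient on $V_{H,2}$ to the worst local Rayleigh quotient over the pieces $V_{H,2,j}$, paying for the overlap with the already-established constant $(1-\beta^2)^{-\ell}$ from (\ref{eq:CS}). Write $\Lambda := \sup_{1\le j\le J}\sup_{v_j\in V_{H,2,j}} \|v_j\|_a^2/\|v_j\|^2$ for the quantity on the right-hand side, so that $\|v_j\|_a^2 \le \Lambda\,\|v_j\|^2$ for every $j$ and every $v_j\in V_{H,2,j}$. Fix an arbitrary $v\in V_{H,2}$; by the decomposition (\ref{eq:V2decomp}) I may choose $v_j\in V_{H,2,j}$ with $v=\sum_{j=1}^J v_j$. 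The goal is then to bound $\|v\|_a$ in terms of $\|v\|$ with the stated constant.

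First I would apply the triangle inequality in the energy norm together with the definition of $\Lambda$ to get $\|v\|_a \le \sum_{j=1}^J \|v_j\|_a \le \Lambda^{1/2}\sum_{j=1}^J \|v_j\|$. Next I would control each summand by the full norm of $v$: since $\|v_j\|^2 \le \sum_{m=1}^J \|v_m\|^2$, inequality (\ref{eq:CS}) gives $\|v_j\|^2 \le (1-\beta^2)^{-\ell}\|v\|^2$, hence $\|v_j\| \le (1-\beta^2)^{-\ell/2}\|v\|$ for each $j$. Summing these $J$ bounds yields $\sum_{j=1}^J\|v_j\| \le J\,(1-\beta^2)^{-\ell/2}\|v\|$, and combining with the previous display gives $\|v\|_a \le \Lambda^{1/2} J\,(1-\beta^2)^{-\ell/2}\,\|v\|$. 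Squaring, dividing by $\|v\|^2$, and taking the supremum over $v\in V_{H,2}$ produces exactly $\sup_{v\in V_{H,2}}\|v\|_a^2/\|v\|^2 \le J^2(1-\beta^2)^{-\ell}\Lambda$, which is the claim.

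I do not expect a genuine obstacle here; the only point requiring care is the provenance of the factor $J^2$. It arises entirely from the crude triangle-inequality step $\|v\|_a \le \sum_j \|v_j\|_a$ combined with the termwise bound $\|v_j\| \le (1-\beta^2)^{-\ell/2}\|v\|$. One could sharpen $J^2$ to a single $J$ by instead expanding $\|v\|_a^2 = \sum_{j,m} a(v_j,v_m)$, applying the Cauchy-Schwarz inequality in the $a$-inner product to each term, and then using the discrete Cauchy-Schwarz inequality $\big(\sum_j \|v_j\|_a\big)^2 \le J\sum_j \|v_j\|_a^2$ before invoking (\ref{eq:CS}); however, the weaker constant $J^2$ stated in the lemma is all that is needed downstream, so the simpler argument above suffices. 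A minor subtlety worth flagging is that the decomposition (\ref{eq:V2decomp}) need not be direct, but this is harmless: I only use the existence of one admissible splitting $v=\sum_j v_j$, and (\ref{eq:CS}) holds for any such splitting, being derived by dyadic pairing from the two-space strengthened Cauchy-Schwarz inequality.
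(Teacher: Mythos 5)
Your proof is correct and follows essentially the same route as the paper's: triangle inequality in the energy norm, the termwise local Rayleigh-quotient bound, and inequality (\ref{eq:CS}) to absorb the overlap, yielding the identical constant $J^2(1-\beta^2)^{-\ell}$. The only (immaterial) difference is that the paper applies (\ref{eq:CS}) once to the whole sum $\sum_j\|v_j\|^2$ after squaring, whereas you apply it to each $\|v_j\|$ individually before summing.
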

\begin{proof}
Let $v\in V_{H,2}$. Using the decomposition (\ref{eq:V2decomp}), 
\[
v = \sum^J_{j=1}v_j, \quad \text{ where }v_j\in V_{H,2,j}. 
\]
Since $\|v\|_a\leq \sum^J_{j=1} \|v_j\|_a$, we have 
\[
\|v\|^2_a \leq J^2\sum^J_{j=1} \|v_j\|^2_a.
\]
Since $\|v_j\|^2_a\leq \sup_{v_j\in V_{H,2,j}}\cfrac{\|v_j\|^2_{a}}{\|v_j\|^2} \|v_j\|^2$, we have 
\[
\|v\|^2_a \leq J^2\sup_{1\leq j\leq J}\sup_{v_j\in V_{H,2,j}}\cfrac{\|v_j\|^2_{a}}{\|v_j\|^2} \sum^J_{j=1} \|v_j\|^2.
\]
Using (\ref{eq:CS}), we then obtain
\[
\|v\|^2_a \leq J^2\sup_{1\leq j\leq J} \sup_{v_j\in V_{H,2,j}}\cfrac{\|v_j\|^2_{a}}{\|v_j\|^2} (1-\beta^2)^{-\ell}\|v\|^2.
\]

\end{proof}
\begin{theorem}
\label{thm33}
The partially explicit scheme (\ref{eq:sch1})-(\ref{eq:sch2})   is stable if 
\begin{equation}
\tau \sup_{v\in V_{H,2,j}} \cfrac{\|v\|_a^2}{\|v\|^2} \leq  J^{-2} \cfrac{(1 -\gamma^{2})}{2-\omega}(1-\beta^2)^\ell, \quad \forall j=1,2,\cdots, J,
\label{eq:stab_cond1}
\end{equation}
where $\ell$ is the smallest integer greater than or equal to $\log_2 J$.
Moreover, we have the following stability estiamte
\[
\cfrac{\gamma^{2}}{2}\sum_{i=1,2}\|u_{H,i}^{n+1}-u_{H,i}^{n}\|^{2}+\cfrac{\tau}{2}\|u_{H}^{n+1}\|_{a}^{2}\leq\cfrac{\gamma^{2}}{2}\sum_{i=1,2}\|u_{H,i}^{n}-u_{H,i}^{n-1}\|^{2}+\cfrac{\tau}{2}\|u_{H}^{n}\|_{a}^{2},\;\text{for }n\geq1.
\]
\end{theorem}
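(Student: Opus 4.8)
The plan is to derive Theorem~\ref{thm33} as an immediate corollary of Theorem~\ref{thm32} and Lemma~\ref{lem:overlap}. The stability estimate asserted here is word-for-word the same as the one established in Theorem~\ref{thm32}; consequently the only thing that needs proof is that the refined, subspace-wise hypothesis (\ref{eq:stab_cond1}) is \emph{sufficient} to guarantee the aggregate hypothesis (\ref{eq:stab_cond}) that Theorem~\ref{thm32} requires on the whole space $V_{H,2}$. Once that implication is in hand, Theorem~\ref{thm32} delivers the conclusion directly, with no further manipulation of the energy identity.

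The first step is to take the supremum over the index $j$ in (\ref{eq:stab_cond1}), which gives
\[
\tau \sup_{1\leq j\leq J}\,\sup_{v_j\in V_{H,2,j}} \frac{\|v_j\|_a^2}{\|v_j\|^2} \;\leq\; J^{-2}\,\frac{1-\gamma^2}{2-\omega}\,(1-\beta^2)^\ell .
\]
The second step is to feed this bound into Lemma~\ref{lem:overlap}. Multiplying the conclusion of the lemma by $\tau$ and substituting the displayed inequality, the amplification factor $J^{2}(1-\beta^2)^{-\ell}$ coming from the overlap of the subspaces is cancelled exactly by the factor $J^{-2}(1-\beta^2)^{\ell}$ built into the right-hand side of (\ref{eq:stab_cond1}), and one is left with
\[
\tau \sup_{v\in V_{H,2}} \frac{\|v\|_a^2}{\|v\|^2} \;\leq\; \frac{1-\gamma^2}{2-\omega},
\]
which is precisely condition (\ref{eq:stab_cond}).

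With (\ref{eq:stab_cond}) verified, applying Theorem~\ref{thm32} yields the stated stability estimate for $n\geq 1$. I do not anticipate any real obstacle: the entire argument is a chaining of two inequalities, and the only point requiring attention is bookkeeping---making sure the powers of $J$ and of $(1-\beta^2)$ in the subspace-wise threshold are exactly the reciprocals of those produced by Lemma~\ref{lem:overlap}, so that their product collapses to the clean constant $\tfrac{1-\gamma^2}{2-\omega}$. The conceptual content lies entirely in Lemma~\ref{lem:overlap}, which quantifies how splitting $V_{H,2}$ into $J$ overlapping pieces forces a more restrictive (by the factor $J^{-2}(1-\beta^2)^{\ell}$) time-step threshold on each individual piece; the present theorem merely repackages that estimate so that the per-subspace quotients $\sup_{v\in V_{H,2,j}}\|v\|_a^2/\|v\|^2$---which are the quantities actually controllable in the construction of the multiscale spaces in Section~4---appear directly in the stability condition.
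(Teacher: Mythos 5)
Your proposal is correct and follows essentially the same route as the paper: invoke Lemma~\ref{lem:overlap} to convert the subspace-wise bound (\ref{eq:stab_cond1}) into the aggregate condition (\ref{eq:stab_cond}), then apply Theorem~\ref{thm32}. (Incidentally, your consistent use of $\gamma$ is cleaner than the paper's proof, which slips into writing $\beta_0$ where $\gamma$ is meant.)
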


\begin{proof}
By Lemma \ref{lem:overlap}, we have
\[
\sup_{v\in V_{H,2}}\cfrac{\|v\|^2_{a}}{\|v\|^2} \leq J^2(1-\beta^2)^{-\ell} \sup_{1\leq j\leq J} \sup_{v_j\in V_{H,2,j}}\cfrac{\|v_j\|^2_{a}}{\|v_j\|^2}.
\]
Thus, if 
\[
\tau \sup_{v\in V_{H,2,j}} \cfrac{\|v\|_a^2}{\|v\|^2} \leq  J^{-2} \cfrac{(1 -\beta_0^{2})}{2-\omega}(1-\beta^2)^\ell, \quad \forall j=1,2,\cdots, J,
\]
we have 
\[
\tau \sup_{v\in V_{H,2}}\cfrac{\|v\|^2_{a}}{\|v\|^2} \leq \tau J^2(1-\beta^2)^{-\ell}\sup_{v_j\in V_{H,2,j}}\cfrac{\|v_j\|^2_{a}}{\|v_j\|^2}\leq \cfrac{1-\beta_0^{2}}{(2-\omega)}.
\]
The required result is obtained by using Theorem \ref{thm32}.
\end{proof}
Theorem \ref{thm33} is a generalization of Theorem \ref{thm32} in a sense
that it only requires a stability condition in each subspace.

\section{Spaces construction}

In this section, we will introduce one of the possible ways to construct
the spaces satisfying (\ref{eq:stab_cond}) or (\ref{eq:stab_cond1}).
We will show that the constrained energy minimization finite element space \cite{chung2018constraint}
is a good choice of $V_{H,1}$ since the CEM basis functions are constructed
such that they are almost orthogonal to a space $\tilde{V}$ which
can be easily defined. To obtain a $V_{H,2}$ satisfying the condition
(\ref{eq:stab_cond}) or (\ref{eq:stab_cond1}), one of the possible ways is using an eigenvalue
problem to construct the local basis function. Before, discussing
the construction of $V_{H,2}$, we will first introduce the 
CEM finite
element space $V_{H,1}$.

\subsection{The CEM-GMsFEM method}
\label{sec:cem}

In this section, we will discuss the CEM method \cite{chung2018constraint} for solving the problem
(\ref{eq:problem_weak}). The CEM method follows the framework of finite
element methods. We will construct the finite element space by solving
a constrained energy minimization problem. We let $\mathcal{T}_{H}$
be a coarse grid partition of $\Omega$ with $N_e$ elements.
For each coarse element $K_{i}\in\mathcal{T}_{H}$,
we consider a set of auxiliary basis functions $\{\psi_{j}^{(i)}\}_{j=1}^{L_{i}}\in V(K_{i})$ by solving
\begin{equation}
\label{eq:spectralCEM}
\int_{K_i} \kappa \nabla \psi_j^{(i)} \cdot \nabla v = \lambda_j^{(i)} s_i(\psi_j^{(i)},v), \quad \forall v\in V(K_i)
\end{equation}
and collecting the first $L_i$ eigenfunctions corresponding to the first $L_i$ smallest eigenvalues
with
\begin{equation}
\label{eq:eigenvalueproblem1}
s_{i}(u,v)=\int_{K_{i}}\tilde{\kappa}uv, 
\end{equation}
and $\tilde{\kappa}=\kappa H^{-2}$ or $\tilde{\kappa}=\kappa \sum_i|\nabla \chi_i|^2$, where $\{\chi_i\}$ is a set of partition of unity functions
corresponding to an overlapping partition of the domain. 

We define a projection operator $\Pi_{{i}}:L^{2}(K_{i})\mapsto V_{aux}^{(i)}\subset L^{2}(K_{i})$
such that 
\[
s_{i}(\Pi_{i}u,v)=s_{i}(u,v), \;\forall v\in V_{aux}^{(i)}:=\text{span}\{\psi_{j}^{(i)}:\;1\leq j\leq L_{i}\}.
\]
We next define a global projection operator by $\Pi:L^{2}(\Omega)\mapsto V_{aux}\subset L^{2}(\Omega)$
\[
s(\Pi u,v)=s(u,v),\;\forall v\in V_{aux}:=\sum_{i=1}^{N_{e}}V^{(i)}_{aux}
\]
where $s(u,v):=\sum_{i=1}^{N_{e}}s_{i}(u|_{K_{i}},v|_{K_{i}})$. 

For each auxiliary basis functions $\psi_{j}^{(i)}$, we can define
a global basis function $\phi_{glo,j}^{(i)}$ by 
\[
\phi_{glo,j}^{(i)}=\arg\min_{v\in V,\Pi v=\psi_{j}^{(i)}}\{a(v,v)\}.
\]
We can see that $\phi_{glo,j}^{(i)}$ will satisfy 
\begin{align*}
a(\phi_{glo,j}^{(i)},v)+s(\mu_{glo,j}^{(i)},v) & =0, \;\forall v\in V,\\
s(\phi_{glo,j}^{(i)},\nu) & =0,\;\forall\nu\in V_{aux},
\end{align*}
for some $\mu_{glo,j}^{(i)}\in V_{aux}$. To localize the basis function,
we will define the basis function $\phi_{j}^{(i)}\in V(K_{i}^{+})$
such that 
\begin{align*}
a(\phi_{j}^{(i)},v)+s(\mu_{j}^{(i)},v) & =0,\;\forall v\in V(K_{i}^{+}), \\
s(\phi_{j}^{(i)},\nu) & =s(\psi_{j}^{(i)},\nu),\;\forall\nu\in V_{aux}(K_{i}^{+}), 
\end{align*}
where $K_{i}^{+}$ is an oversampling domain of $K_{i}$ obtained by enlarging $K_i$ by a few coarse grid layers. 

We then define the spaces $V_{glo}$ and $V_{cem}$ as 
\begin{align}
V_{glo} & :=\text{span}\{\phi_{glo,j}^{(i)}:\;1\leq i\leq N_{e},1\leq j\leq L_{i}\},\label{eq:glospace} \\
V_{cem} & :=\text{span}\{\phi_{j}^{(i)}:\;1\leq i\leq N_{e},1\leq j\leq L_{i}\}. \label{eq:cemspace}
\end{align}
The global solution $u_{glo}\in V_{glo}$ and the CEM solution $u_{cem} \in V_{cem}$ are respectively 
defined as 
\begin{align*}
(u_{glo,t},v) & +a(u_{glo},v)=0, \;\forall v\in V_{glo},\\
(u_{cem,t},v) & +a(u_{cem},v)=0, \;\forall v\in V_{cem}
\end{align*}
where $u_{glo,t}$ and $u_{cem,t}$ are the time derivatives of $u_{glo}$ and $u_{cem}$ respectively. 
We note that $u_{cem}$ is a multiscale approximation of $u$,
and its convergence is analyzed in \cite{li2019constraint}. 

We remark that the $V_{glo}$ is $a-$orthogonal to a space $\tilde{V}:=\{v\in V:\;\Pi(v)=0\}$.
We also know that $V_{cem}$ is closed to $V_{glo}$ and therefore
it is almost orthogonal to $\tilde{V}$. Thus, we can choice $V_{cem}$
to be $V_{H,1}$ and construct a space $V_{H,2}$ in $\tilde{V}$. 

\subsection{Construction of $V_{H,2}$}

In this section, we present two choices for the space $V_{H,2}$ which will give an explicit stability condition based on (\ref{eq:stab_cond}) or (\ref{eq:stab_cond1}), 
and these choices are motivated by reducing errors (see the Appendix). 
Recall that $V= H^1_0(\Omega)$. For any set $S$, we let $V(S) = H^1(S)$ and $V_0(S) = H^1_0(S)$.

\subsubsection{First choice}
\label{sec:choice1}

We will define basis functions for each coarse neighborhood $\omega_i$,
which is the union of all coarse elements having the $i$-th coarse grid node. 
For each coarse neighborhood $\omega_{i}$, we consider the following
eigenvalue problem: find $(\xi_{j}^{(i)},\gamma_{j}^{(i)})\in ( V_0(\omega_i) \cap \tilde{V}) \times\mathbb{R}$,
\begin{align}
\label{eq:eigenvalueproblem_case2}
\int_{\omega_{i}}\kappa\nabla\xi_{j}^{(i)}\cdot\nabla v & = \cfrac{\gamma_{j}^{(i)}}{H^2}\int_{\omega_{i}}\xi_{j}^{(i)}v, \;\forall v\in V_0(\omega_i) \cap \tilde{V}.
\end{align}
We arrange the eigenvalues by $\gamma_1^{(i)}  \leq \gamma_2^{(i)} \geq \cdots$.
In order to obtain a reduction in error, we will select the first few $J_i$ dominant eigenfunctions corresponding to smallest eigenvalues of (\ref{eq:eigenvalueproblem_case2}). 
We define
\begin{equation*}
V_{H,2} = \text{span} \{ \xi_j^{(i)} \; | \; \forall \omega_i, \forall 1\leq j \leq J_i\}.
\end{equation*}

We assume that the domain $\Omega$ is a square, and that the coarse grid $\mathcal{T}_H$ is a regular mesh. Then, we can
partition the set of all coarse neighborhoods $\omega_i$ into $4$ subsets, such that each subset contains disjoint coarse neighborhoods, see \cite{chung2015residual}
for more details. Based on this, we can subdivide $V_{H,2}$ into $4$ spaces $V_{H,2,j}$, $j=1,2,3,4$. 
Notice that, for each $j=1,2,3,4$, we have
\begin{equation*}
\| v\|_a^2 \leq (\max_{i} \gamma_{J_i+1}^{(i)}) H^{-2} \|v\|^2, \quad \forall v\in V_{H,2,j}.
\end{equation*}
Hence, an explicit form of the stability condition (\ref{eq:stab_cond1}) is given by
\begin{equation}
\tau  \leq (16)^{-1} (\max_i \gamma_{J_i+1}^{(i)})^{-1}  (1 -\gamma^{2})(1-\beta^2)^2 (2-\omega)^{-1} H^2.
\end{equation}
We remark that some motivation of this choice of $V_{H,2}$ is discussed in the Appendix. 
We observe that there is a tradeoff in using the number of functions in $V_{H,2}$, namely, more functions in $V_{H,2}$ will
lead to more severe stability condition. 

\subsubsection{Second choice}

The second choice of $V_{H,2}$ is based on the CEM type finite
element space. For
each coarse element $K_{i}$, we will solve an eigenvalue problem to obtain the auxiliary
basis. More precisely, we find eigenpairs $(\xi_{j}^{(i)},\gamma_{j}^{(i)})\in(V(K_{i})\cap\tilde{V})\times\mathbb{R}$ by solving
\begin{align}
\label{eq:spectralCEM2}
\int_{K_{i}}\kappa\nabla\xi_{j}^{(i)}\cdot\nabla v & =\gamma_{j}^{(i)}\int_{K_{i}}\xi_{j}^{(i)}v, \;\ \forall v\in V(K_{i})\cap\tilde{V}.
\end{align}
For each $K_i$, we choose the first few $J_i$ eigenfunctions corresponding to the smallest $J_i$ eigenvalues. 
The resulting space is called $V_{aux,2}$.
For each auxiliary basis function $\xi_j^{(i)}$, we define the global basis function $\zeta_{glo,j}^{(i)} \in V$ such
that $\mu_{glo,j}^{(i)} \in V_{aux,1}$, $ \mu_{glo,j}^{(i),2} \in V_{aux,2}$ and 
\begin{align}
a(\zeta_{glo,j}^{(i)},v)+s(\mu_{glo,j}^{(i),1},v)+ ( \mu_{glo,j}^{(i),2},v) & =0, \;\forall v\in V, \label{eq:v2a} \\
s(\zeta_{glo,j}^{(i)},\nu) & =0, \;\forall\nu\in V_{aux,1}, \label{eq:v2b} \\
(\zeta_{glo,j}^{(i)},\nu) & =( \xi_{j}^{(i)},\nu), \;\forall\nu\in V_{aux,2}. \label{eq:v2c}
\end{align}
where we use the notation $V_{aux,1}$ to denote the space $V_{aux}$ defined in Section \ref{sec:cem}.
We recall that the basis function $\phi_{glo,j}^{(i)}$ in $V_{glo}$ constructed in Section \ref{sec:cem}
satisfies
\begin{align*}
a(\phi_{glo,j}^{(i)},v)+s(\mu_{glo,j}^{(i)},v) & =0, \;\forall v\in V, \\
s(\phi_{glo,j}^{(i)},\nu) & =s(\psi_{j}^{(i)},\nu),\;\forall\nu\in V_{aux,1}
\end{align*}
where $\mu_{glo,j}^{(i)} \in V_{aux,1}$. 
Thus, taking $v = \zeta_{glo,l}^{(k)}$ in the above system, 
we have 
\begin{align*}
a(\phi_{glo,j}^{(i)},\zeta_{glo,l}^{(k)}) & =-s(\mu_{glo,j}^{(i)},\zeta_{glo,l}^{(k)})=0,\quad \;\forall i,j,k,l.
\end{align*}
We define $V_{glo,2}=\text{span}\{\zeta_{glo,j}^{(i)}|\;j\leq J_i\}$. 
This is our choice of $V_{H,2}$, that is, we take $V_{H,2} = V_{glo,2}$.

Now, we will derive a more explicit stability condition based on (\ref{eq:stab_cond}). 
To do so, we define a projection operator $\tilde{\Pi}:L^{2}(\Omega)\rightarrow V_{aux,1}+V_{aux,2}$ by
\[
\tilde{\Pi}(v)=v_{aux,1}+v_{aux,2}
\]
where 
\[
v_{aux,1}=\Pi(v)
\]
and 
\[
(v_{aux,2},w)=(v-\Pi(v),w), \;\;\forall w\in V_{aux,2}.
\]
We remark that for any $v\in L^2(\Omega)$, we have
\begin{equation}
\label{eq:projv2}
(\tilde{\Pi}(v),w)=(v_{aux,1}+v_{aux,2},w)=(v_{aux,1}+v-v_{aux,1},w)=(v,w), \;\;\forall w\in V_{aux,2}.
\end{equation}
We assume that,
for each $v\in V_{aux,2}(K_{i})$, there exist a $P(v)\in V_{0}(K_{i})$
such that 
\begin{equation}
\label{eq:assumption}
\tilde{\Pi} P(v)=v,\quad \|P(v)\|_{a}\leq C_{1}H^{-1}\|v\|,
\end{equation}
where $C_1$ is independent of the contrast.

For any $\tilde{v}\in V_{glo,2}$, we have $s(\tilde{v},\nu) = 0 $ for all $\nu \in V_{aux,1}$.
Thus, using (\ref{eq:v2a}), (\ref{eq:v2b}) and (\ref{eq:projv2}), there are $\mu^{(1)} \in V_{aux,1}$ and $\mu^{(2)} \in V_{aux,2}$ such that 
\begin{align*}
a(\tilde{v},v)+s(\mu^{(1)},v)+\int_{\Omega}\mu^{(2)}v & =0, \;\;\forall v\in V, \\
s(\tilde{v},\nu) & =0,\;\;\forall\nu\in V_{aux,1}, \\
  \int_{\Omega}\tilde{v}\nu & =  \int_{\Omega}\tilde{\Pi}(\tilde{v})\nu,\;\;\forall\nu\in V_{aux,2}.
\end{align*}
Taking $v = \tilde{v}$, we have 
\begin{align*}
a(\tilde{v},\tilde{v}) & =-\int_{\Omega}\mu^{(2)}\tilde{v}\leq\|\mu^{(2)}\| \, \|\tilde{\Pi}(\tilde{v})\|.
\end{align*}
On the other hand, by (\ref{eq:assumption}), there is $P(\mu^{(2)})$ such that $\tilde{\Pi} P\mu^{(2)} = \mu^{(2)}$. 
By definition of $\tilde{\Pi}$, we have 
\[
\tilde{\Pi}(P\mu^{(2)}) = \Pi(P\mu^{(2)})+ v_{aux,2}
\]
where $\Pi(P\mu^{(2)})\in V_{aux,1}$ and $v_{aux,2}\in V_{aux,2}$. 
Note that $V_{aux,1} + V_{aux,2}$ is a direct sum due to the $s$-orthogonality of the spaces $V_{aux,1}$ and $V_{aux,2}$.
So, using the assumption $\mu^{(2)}=\tilde{\Pi}(P\mu^{(2)})$, we have 
\begin{align*}
\Pi(P\mu^{(2)}) & = 0\\
\mu^{(2)} & = v_{aux,2}
\end{align*}
which implies 
\begin{align*}
s(P(\mu^{(2)}), \nu) & =s(\Pi(P(\mu^{(2)})), \nu) = 0, \;\;\forall \nu \in V_{aux,1},\\
(P(\mu^{(2)}), \nu) & =(\tilde{\Pi}P(\mu^{(2)}), \nu)  = (\mu^{(2)},\nu),\;\; \forall \nu \in V_{aux,2}.
\end{align*}
Thus, we have
\begin{align*}
\|\mu^{(2)}\|^{2} & =\int_{\Omega}\mu^{(2)} \tilde{\Pi} P(\mu^{(2)})= \int_{\Omega}\mu^{(2)}  P(\mu^{(2)}) \\
 & = s(\mu^{(1)},P(\mu^{(2)}))+\int_{\Omega}\mu^{(2)}P(\mu^{(2)}) = -a(\tilde{v},P(\mu^{(2)}))  \\
 & \leq C_{1}H^{-1}\|\mu^{(2)}\|_{L^{2}}\|\tilde{v}\|_{a},
\end{align*}
which implies
\[
a(\tilde{v},\tilde{v})\leq C_{1}^{2}H^{-2}\|\tilde{\Pi}(\tilde{v})\|^{2}\leq C_{1}^{2}H^{-2}\|\tilde{v}\|^{2}, \quad \forall \tilde{v} \in V_{H,2}.
\]
Thus, the stability condition (\ref{eq:stab_cond}) becomes
\[
\tau \leq C_1^{-2} H^2 \cfrac{1-\gamma^2}{2-\omega}.
\]
Therefore, we have $V_{glo},\;V_{glo,2}$ satisfy the required condition
for our time-splitting method. To localize the space $V_{glo,2}$, we can use similar ideas
as in the CEM method.


We can see one of the major differences of the eigenvalue problem (\ref{eq:spectralCEM}) and the eigenvalue problem (\ref{eq:spectralCEM2}) is that the eigenvalue problem (\ref{eq:spectralCEM2}) prefers to select some basis functions representing solution in regions with low permeability. We can show that the $L_2$-norm of the basis functions in the high permeability regions is inversely proportional to the contrast of the permeability. Hence, it is reasonable to assume the auxiliary basis functions satisfy the condition (\ref{eq:assumption}). We will present numerical results for the constant $C_1$ in the next section and investigate some
simplified cases analytically.

Next, we will discuss a simple case to demonstrate how the above 
condition  can be satisfied with a contrast independent $C_1$. 
We consider 
\[
\kappa=\begin{cases}
1 & x\in\Omega\backslash\Omega_{\kappa}\\
\kappa_{\max} & x\in\Omega_{\kappa}
\end{cases}
\]
for some $\Omega_{\kappa}\subset\Omega$ where $\kappa_{\max}>>1$.

In this case, we can consider the auxiliary space $V_{aux,1}$with
the following eigenvalue problem: 
\begin{align*}
\int_{K_{i}}\kappa\nabla\xi_{j}^{(i)}\cdot\nabla v & =\gamma_{j}^{(i)}s(\xi_{j}^{(i)},v)\\
 & =\gamma_{j}^{(i)}\int_{K_{i}}\tilde{\kappa}\xi_{j}^{(i)}v\;\forall v\in V_{\kappa}(K_{i})\cap\tilde{V},
\end{align*}
where $\tilde{\kappa}=H^{-2}\kappa$ and $V_{\kappa}(K_{i})=\{v\in V(K_{i})|\;v|_{\Omega_{\kappa}}=0\}$.

We then consider the auxiliary space $V_{aux,2}$ with the following
eigenvalue problem: 
\begin{align*}
\int_{\omega_{i}}\kappa\nabla\xi_{j}^{(i)}\cdot\nabla v & =\gamma_{j}^{(i)}H^{-2}\int_{\omega_{i}}\xi_{j}^{(i)}v\;\forall v\in V_{\kappa}(K_{i})\cap\tilde{V},
\end{align*}
where $V_{\kappa}(K_{i})=\{v\in V(K_{i})|\;v|_{\Omega_{\kappa}}=0\}$.

For each coarse element $K_{i}$, we define a bubble function $B_{i}\in C_{0}^{\infty}(K_{i})$
such that 
\[
\|\nabla B_{i}\|_{L^{\infty}}\leq DH^{-1}\;\text{and }\|B_{i}\|_{L^{\infty}}=1\;\forall i.
\]
We then consider a function $B\in V$ such that $B|_{K_{i}}=B_{i}$
for any $i$. 

We define two constants $\gamma_{1}<1$ and $\gamma_{2}$ such that
\[
\gamma_{1}:=\sup_{w_{1}\in V_{aux,1},w_{2}\in V_{aux,2}}\cfrac{\int_{\Omega}Bw_{1}w_{2}}{\|B^{\frac{1}{2}}w_{1}\|_{L^2}\|B^{\frac{1}{2}}w_{2}\|_{L^2}}
\]
\[
\gamma_{2}:=\sup_{w_{1}\in V_{aux,1},w_{2}\in V_{aux,2}}\cfrac{\|w_{2}\|_{L^2}}{\|B^{\frac{1}{2}}w_{2}\|_{L^2}}.
\]

\begin{lemma}
\label{lem:simple}
If we consider a simplified case described above, there exist a $C>0$ such that 
\[
\|v\|_{a}\leq CH^{-1}\|v\|_{L^{2}}\;\forall v\in V_{glo,2}.
\]
\end{lemma}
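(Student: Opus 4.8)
The plan is to reduce Lemma~\ref{lem:simple} to the abstract hypothesis (\ref{eq:assumption}). Indeed, the computation carried out just before the lemma shows that whenever (\ref{eq:assumption}) holds with a constant $C_1$, one has $a(\tilde v,\tilde v)\le C_1^2 H^{-2}\|\tilde v\|^2$ for all $\tilde v\in V_{glo,2}$, which is exactly the asserted bound with $C=C_1$. Since $V_{glo,2}$ is our choice of $V_{H,2}$, the whole task is therefore to exhibit, for each $v\in V_{aux,2}(K_i)$, a function $P(v)\in V_0(K_i)$ with $\tilde\Pi P(v)=v$ and $\|P(v)\|_a\le C_1 H^{-1}\|v\|$, where $C_1$ is independent of $\kappa_{\max}$. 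The decisive structural feature of the simplified problem is that every auxiliary function (in both $V_{aux,1}$ and $V_{aux,2}$) vanishes on $\Omega_\kappa$, because the eigenvalue problems are posed on $V_\kappa(K_i)\cap\tilde V$ with $V_\kappa(K_i)=\{v\in V(K_i):v|_{\Omega_\kappa}=0\}$.

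First I would construct $P(v)$ in the form $P(v)=B\eta$, where $B$ is the bubble function of the statement and $\eta\in W:=V_{aux,1}(K_i)+V_{aux,2}(K_i)$ is determined by the two families of moment conditions
\[
\int_{K_i} B\,\eta\,\nu = 0 \quad \forall\,\nu\in V_{aux,1}(K_i),
\qquad
\int_{K_i} B\,\eta\,w = \int_{K_i} v\,w \quad \forall\,w\in V_{aux,2}(K_i).
\]
These conditions determine $\eta$ uniquely in $W$: the weighted bilinear form $(f,g)\mapsto\int_{K_i}Bfg$ is positive definite on $W$ (as $B>0$ in the interior of $K_i$), and the constant $\gamma_1<1$ bounds the angle between $V_{aux,1}$ and $V_{aux,2}$ in this inner product, so the associated Gram system is solvable and well-conditioned uniformly in the contrast. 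I would then verify $\tilde\Pi(B\eta)=v$ as follows: since $B\eta$ and all auxiliary functions vanish on $\Omega_\kappa$ and $\kappa\equiv 1$ off $\Omega_\kappa$, the first family gives $s(B\eta,\nu)=H^{-2}\int_{K_i}B\eta\,\nu=0$, hence $\Pi(B\eta)=0$; the second gives $(B\eta,w)=(v,w)$ for all $w\in V_{aux,2}$, so the $V_{aux,2}$-component of $\tilde\Pi(B\eta)$ is exactly $v$, and therefore $\tilde\Pi(B\eta)=0+v=v$.

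Next I would bound $\|P(v)\|_a$. Testing the defining relations with $\eta$ itself, and writing $\eta=\eta_1+\eta_2$ with $\eta_i$ in the respective spaces, the $V_{aux,1}$-condition kills the $\eta_1$ contribution and yields $\int_{K_i}B\eta^2=(v,\eta_2)\le\|v\|\,\|\eta_2\|$. Using $\gamma_2$ to pass from $\|\eta_2\|$ to $\|B^{1/2}\eta_2\|$, and the strengthened Cauchy--Schwarz constant $\gamma_1$ to control $\|B^{1/2}\eta_2\|$ by $\|B^{1/2}\eta\|$, I obtain $\|B^{1/2}\eta\|\le C\|v\|$ with $C$ depending only on $\gamma_1,\gamma_2$. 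The crucial point is that $P(v)=B\eta$ vanishes on $\Omega_\kappa$, so its energy norm collapses to $\|P(v)\|_a^2=\int_{\Omega\setminus\Omega_\kappa}|\nabla(B\eta)|^2$, which no longer sees $\kappa_{\max}$. Expanding $\nabla(B\eta)=\eta\nabla B+B\nabla\eta$ and using $\|\nabla B\|_{L^\infty}\le DH^{-1}$, $\|B\|_{L^\infty}=1$, together with the inverse-type bound $\int_{\Omega\setminus\Omega_\kappa}|\nabla\eta|^2\le\|\eta\|_a^2\le CH^{-2}\|\eta\|^2$ (valid because $\eta$ lies in the span of the selected low auxiliary modes, whose eigenvalues are $O(1)$) and the equivalence of $\|\eta\|$ with $\|B^{1/2}\eta\|$ on $W$, gives $\|P(v)\|_a\le CH^{-1}\|B^{1/2}\eta\|\le C_1 H^{-1}\|v\|$, completing the verification of (\ref{eq:assumption}).

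The \emph{main obstacle} is the last set of norm equivalences: one must show that all the constants ($\|\eta\|\lesssim\|B^{1/2}\eta\|$, $\|\eta\|_a\lesssim H^{-1}\|\eta\|$, and the component estimates) depend only on $\gamma_1$, $\gamma_2$, $D$, and the bounded low eigenvalues, and never on $\kappa_{\max}$. The most delicate piece is controlling $\nabla\eta$ on $\Omega\setminus\Omega_\kappa$, which relies on the inverse inequality furnished by the eigenvalue problems restricted to the dominant modes; the conceptual crux, however, is simply the observation that confining $P(v)$ to bubble-modulated auxiliary functions forces it to vanish on $\Omega_\kappa$ and thereby eliminates the contrast from the energy entirely.
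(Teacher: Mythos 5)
Your proof is essentially correct, but it follows a genuinely different route from the paper's. The paper never invokes the abstract hypothesis (\ref{eq:assumption}) in its proof of Lemma \ref{lem:simple}: it takes $w\in V_{glo,2}$, starts from $a(w,w)\le H^{-2}\|\tilde\Pi(w)\|\,\|\mu_w^2\|$, and then bounds $H^{-1}\|\mu_w^2\|$ by $\|w\|_a$ directly, by testing the saddle-point system defining $w$ with the bubble-modulated multipliers $B\mu_w^1$ and $B\mu_w^2$ and absorbing the cross terms via the strengthened Cauchy--Schwarz constant $\gamma_1$. You instead verify (\ref{eq:assumption}) itself: you build the lifting $P(v)=B\eta$ with $\eta$ in $V_{aux,1}(K_i)+V_{aux,2}(K_i)$ determined by a $B$-weighted Gram system, check $\tilde\Pi(B\eta)=v$ (which does go through, since in the simplified case all auxiliary functions vanish on $\Omega_\kappa$ and $\kappa\equiv 1$ elsewhere, so the $s$-form degenerates to $H^{-2}$ times the $L^2$ form on their supports), and then feed $C_1$ into the general derivation preceding the lemma. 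Both arguments hinge on exactly the same structural facts --- bubble-times-auxiliary functions are admissible test functions whose energy never sees $\kappa_{\max}$, plus the constants $\gamma_1,\gamma_2$ and the boundedness of the retained eigenvalues --- so the two proofs are close cousins; yours is more modular and has the added benefit of actually establishing the contrast-independence of the constant $C_1$ in (\ref{eq:assumption}), which is what Tables in Section 5 purport to measure, while the paper's version is shorter because it never has to solve for $\eta$. One caveat applies equally to both: your final norm equivalences require a $\gamma_2$-type constant for $V_{aux,1}$ as well as for $V_{aux,2}$ (you need $\|\eta_1\|\lesssim\|B^{1/2}\eta_1\|$), and the paper's definition of $\gamma_2$ only covers $V_{aux,2}$; the paper's own proof implicitly uses the analogous bound $\|\mu_w^1\|_s\lesssim\|B^{1/2}\mu_w^1\|_s$ in its final combination, so you are no worse off, but you should state this extra (contrast-independent, finite-dimensional) equivalence explicitly rather than folding it into ``the equivalence of $\|\eta\|$ with $\|B^{1/2}\eta\|$ on $W$.''
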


\begin{proof}
Given a $w\in V_{glo,2}$, by the definition of $V_{glo,2}$, we have
\begin{align*}
a(w,v)+s(\mu_{w}^{1},v)+H^{-2}(\mu_{w}^{2},v) & =0\;\forall v\in V\\
s(w,\nu) & =0\;\forall\nu\in V_{aux,1}\\
(w,\nu) & =(\tilde{\Pi}(w),\nu)\;\forall\nu\in V_{aux,2}.
\end{align*}
Thus, we have 
\begin{align*}
a(w,w) & =-s(\mu_{w}^{1},w)-H^{-2}(\mu_{w}^{2},w)\\
 & \leq H^{-2}\|\tilde{\Pi}(w)\|_{L^2}\|\mu_{w}^{2}\|_{L^2}.
\end{align*}
We have
\begin{align*}
H^{-2}(\mu_{w}^{2},B\mu_{w}^{2}) & =-a(w,B\mu_{w}^{2})-s(\mu_{w}^{1},B\mu_{w}^{2})\\
 & \leq\|w\|_{a}\|B\mu_{w}^{2}\|_{a}+\gamma_{1}H^{-2}\|B^{\frac{1}{2}}\mu_{w}^{2}\|_{L^2}\|B^{\frac{1}{2}}\mu_{w}^{1}\|_{L^2}\\
 & \leq\|w\|_{a}\|B\mu_{w}^{2}\|_{a}+\cfrac{\gamma_{1}}{2}H^{-2}\Big(\|B^{\frac{1}{2}}\mu_{w}^{2}\|_{L^2}^{2}+\|B^{\frac{1}{2}}\mu_{w}^{1}\|_{s}^{2}\Big)
\end{align*}
\begin{align*}
s(\mu_{w}^{1},B\mu_{w}^{1}) & =-a(w,B\mu_{w}^{1})-H^{-2}(\mu_{w}^{2},B\mu_{w}^{1})\\
 & \leq\|w\|_{a}\|B\mu_{w}^{2}\|_{a}+\cfrac{\gamma_{1}}{2}H^{-2}\Big(\|B^{\frac{1}{2}}\mu_{w}^{2}\|_{L^2}^{2}+\|B^{\frac{1}{2}}\mu_{w}^{1}\|_{s}^{2}\Big).
\end{align*}
Consequently,
\[
(1-\gamma_{1})\Big(H^{-2}\|B^{\frac{1}{2}}\mu_{w}^{2}\|_{L^2}^{2}+\|B^{\frac{1}{2}}\mu_{w}^{1}\|_{s}^{2}\Big)\leq\|w\|_{a}(\|B\mu_{w}^{2}\|_{a}+\|B\mu_{w}^{1}\|_{a}).
\]
We have
\begin{align*}
\|B\mu_{w}^{i}\|_{a}^{2} & =\int\kappa|\nabla(B\mu_{w}^{k})|^{2}\\
 & \leq2\Big(\int\kappa|\nabla B|^{2}|\mu_{w}^{k}|^{2}+\int\kappa|B|^{2}|\nabla\mu_{w}^{k}|^{2}\Big)\;\text{for }k=1,2.
\end{align*}
Thus, we have 
\begin{align*}
\|B\mu_{w}^{1}\|_{a}^{2} & \leq D\|\mu_{w}^{1}\|_{s}^{2}+\int\kappa|\nabla\mu_{w}^{1}|^{2}\\
 & \leq D(1+\max_{i}\{\lambda_{L_{i}}^{(i)}\})\|\mu_{w}^{1}\|_{s}^{2}
\end{align*}
and 
\begin{align*}
\|B\mu_{w}^{2}\|_{a}^{2} & \leq DH^{-2}\|\mu_{w}^{2}\|_{L^2}^{2}+\int\kappa|\nabla\mu_{w}^{2}|^{2}\\
 & \leq D(1+\max_{i}\{\gamma_{J_{i}}^{(i)}\})\|\mu_{w}^{2}\|_{s}^{2}.
\end{align*}
We then have 
\[
\Big(H^{-2}\|B^{\frac{1}{2}}\mu_{w}^{2}\|_{L^2}^{2}+\|B^{\frac{1}{2}}\mu_{w}^{1}\|_{s}^{2}\Big)^{\frac{1}{2}}\leq C(1+M)(1-\gamma_{1})^{-1}\|w\|_{a},
\]
where $E=\max_{i}\{\lambda_{L_{i}}^{(i)},\gamma_{J_{i}}^{(i)}\}$.
Hence, we have 
\begin{align*}
H^{-1}\|\mu_{w}^{2}\|_{L^2} & \leq\gamma_{2}H^{-1}\|B^{\frac{1}{2}}\mu_{w}^{2}\|_{L^2}\\
 & \leq CD(1+M)(1-\gamma_{1})^{-1}\|w\|_{a}
\end{align*}
and 
\begin{align*}
\|w\|_{a} & \leq CD(1+E)(1-\gamma_{1})^{-1}H^{-1}\|\tilde{\Pi}(w)\|_{L^2}\\
 & \leq CD(1+E)(1-\gamma_{1})^{-1}H^{-1}\|w\|_{L^2}.
\end{align*}

\end{proof}

Since the CEM basis functions exponentially converge to the global basis functions, we conclude that the space $V_{H,2}$ also satisfies Lemma \ref{lem:simple} if we use a large enough oversampling domain.

\section{Numerical Result}

In this section, we present representative numerical results
that show that proposed approaches can select time step independent
of the contrast and predict an accurate approximation for the solution.
 We consider the following 
parameters for the mesh sizes, and time steps
\[
H=1/10,\;h=1/100,\;dt=10^{-4},\;T=0.05.
\]
Here, $H$ is the coarse mesh size, $h$ is the fine mesh size, 
$dt$ is the fine time step, and $T$ is the final time step.
The conductivity 
fields and forcing terms are chosen differently for examples and
described in each part.

In our first numerical example, we choose a smooth source term.
In this case, CEM-GMsFEM without additional basis functions provide
results similar to those CEM-GMsFEM with additional basis functions
that are treated explicitly in our method. 
In this paper, we do 
not dwell on accuracy issues related the use of additional basis functions
in CEM-GMsFEM (that are treated explicitly). These basis functions
are needed in many cases to capture dynamics effects, in wave equations,
and so on. We will discuss this in our future works.

\subsection*{Numerical Example 1}

The medium parameter $\kappa$, the reference solution at final time
$u_{ref}$, and the source term $f$ are shown in Figure \ref{fig:perms3}.
As we see, the permeability field is heterogeneous with high contrast
streaks. Due to smooth source term, the solution's features in high conductivity
field regions are smeared. 

In Figure \ref{fig:results3.1}, we depict the error in $L_2$ 
and in energy norm
that correspond to three methods. The blue curve denotes the error
due to CEM without additional basis functions. Because of smooth source
term and problem setup, this method provides an error that is comparable
to the error when we consider additional basis functions. The additional
degrees of freedom treated both implicitly (red curve) and explicitly
(yellow curve). As we see that these two curves coincide. This indicates
that the time stepping that is chosen independent of contrast provides
as accurate solution as full backward Euler for our proposed partial explicit method. Consequently, this backs up
our discussions. In  Figure \ref{fig:results3.1}, we consider 
$V_{2,H}$, which is the first type, and in  Figure \ref{fig:results3.2},
we consider the case with the second type $V_{2,H}$. The results are similar,
which show that both spaces provide a robust partial explicit discretization.

\begin{figure}[H]
\centering

\includegraphics[scale=0.35]{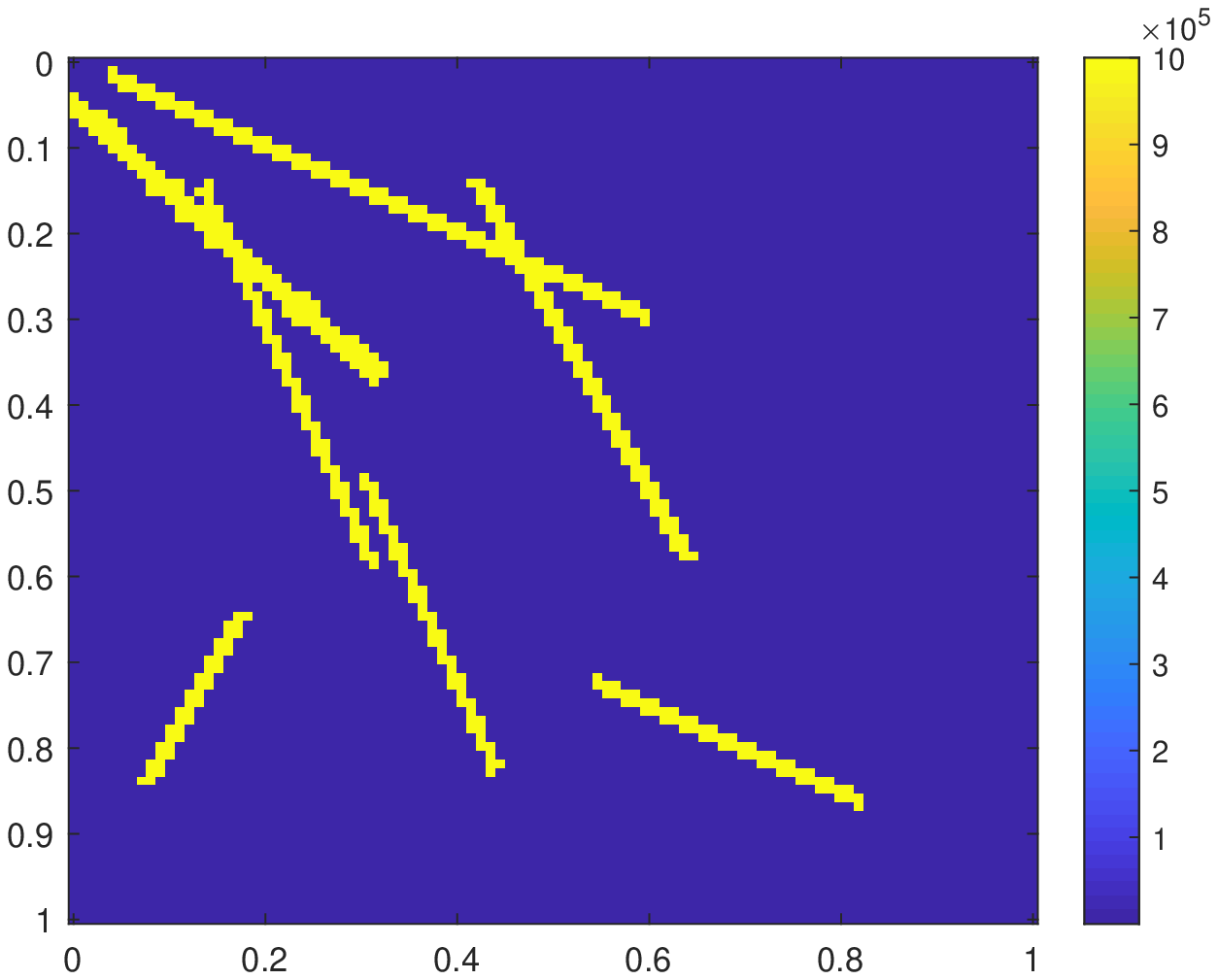} 
\includegraphics[scale=0.35]{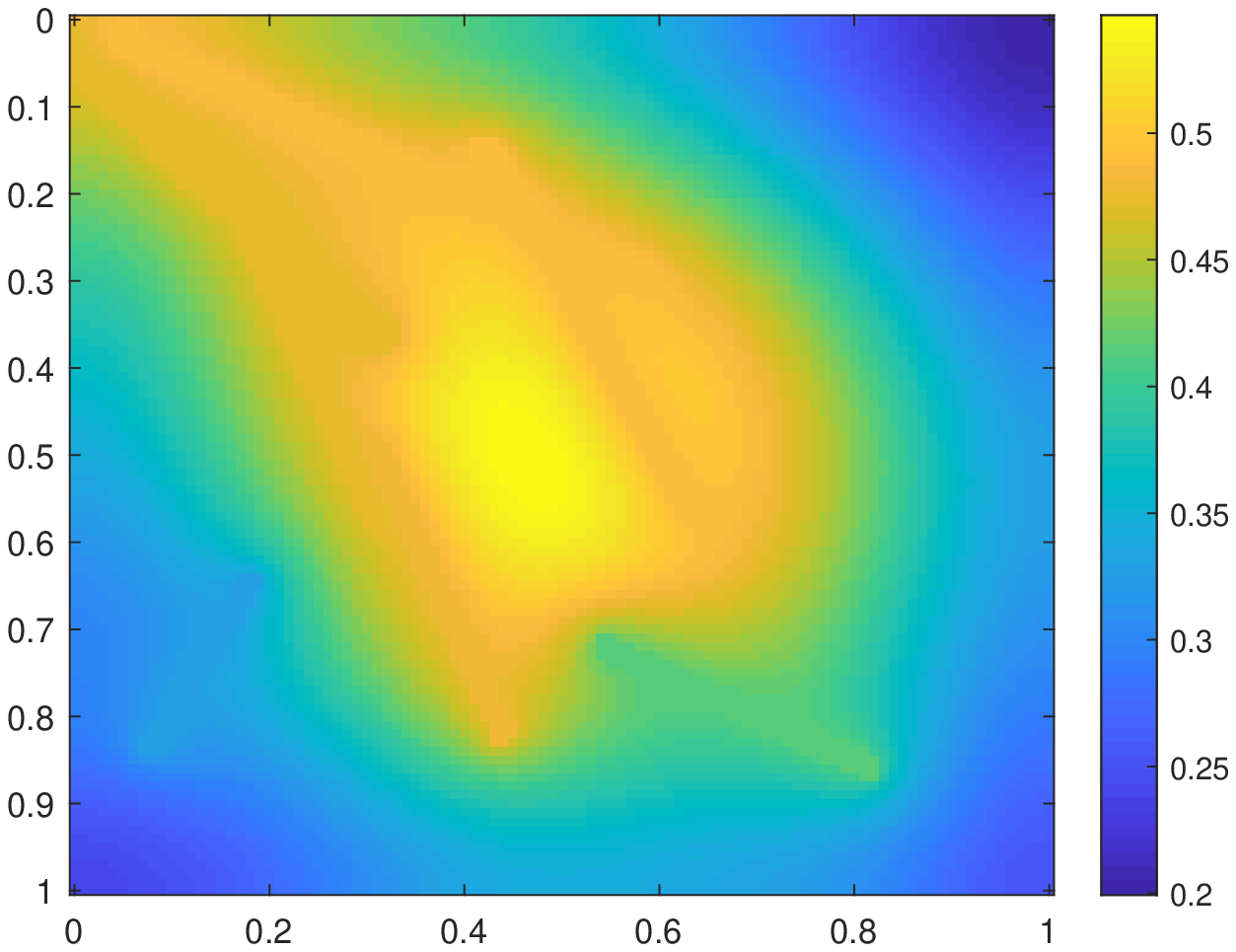}
\includegraphics[scale=0.35]{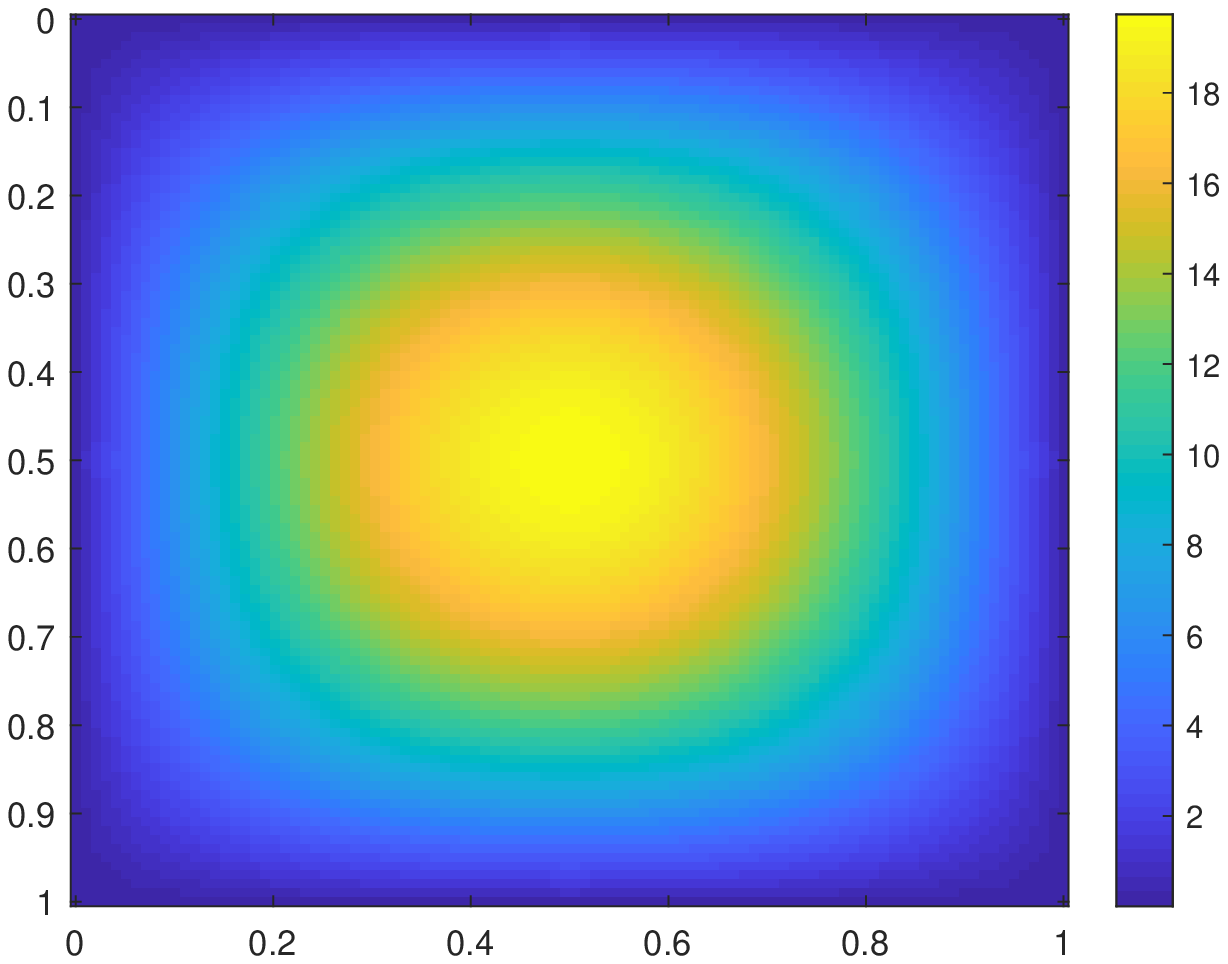}
\caption{Left: $\kappa$. Middle: reference solution at the final time. Right: $f$.}
\label{fig:perms3}
\end{figure}

\begin{figure}[H]
\centering

\includegraphics[scale=0.4]{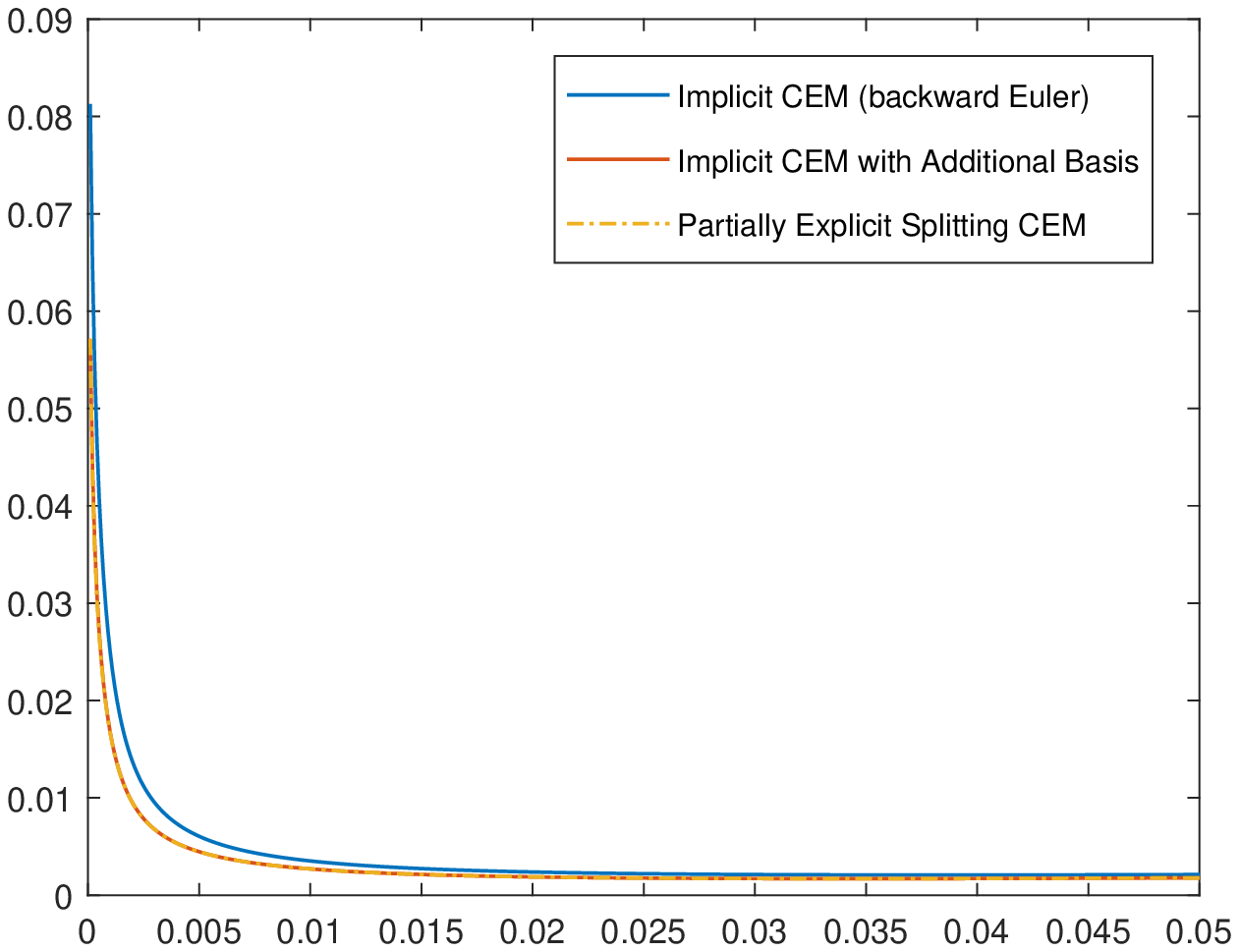} \includegraphics[scale=0.4]{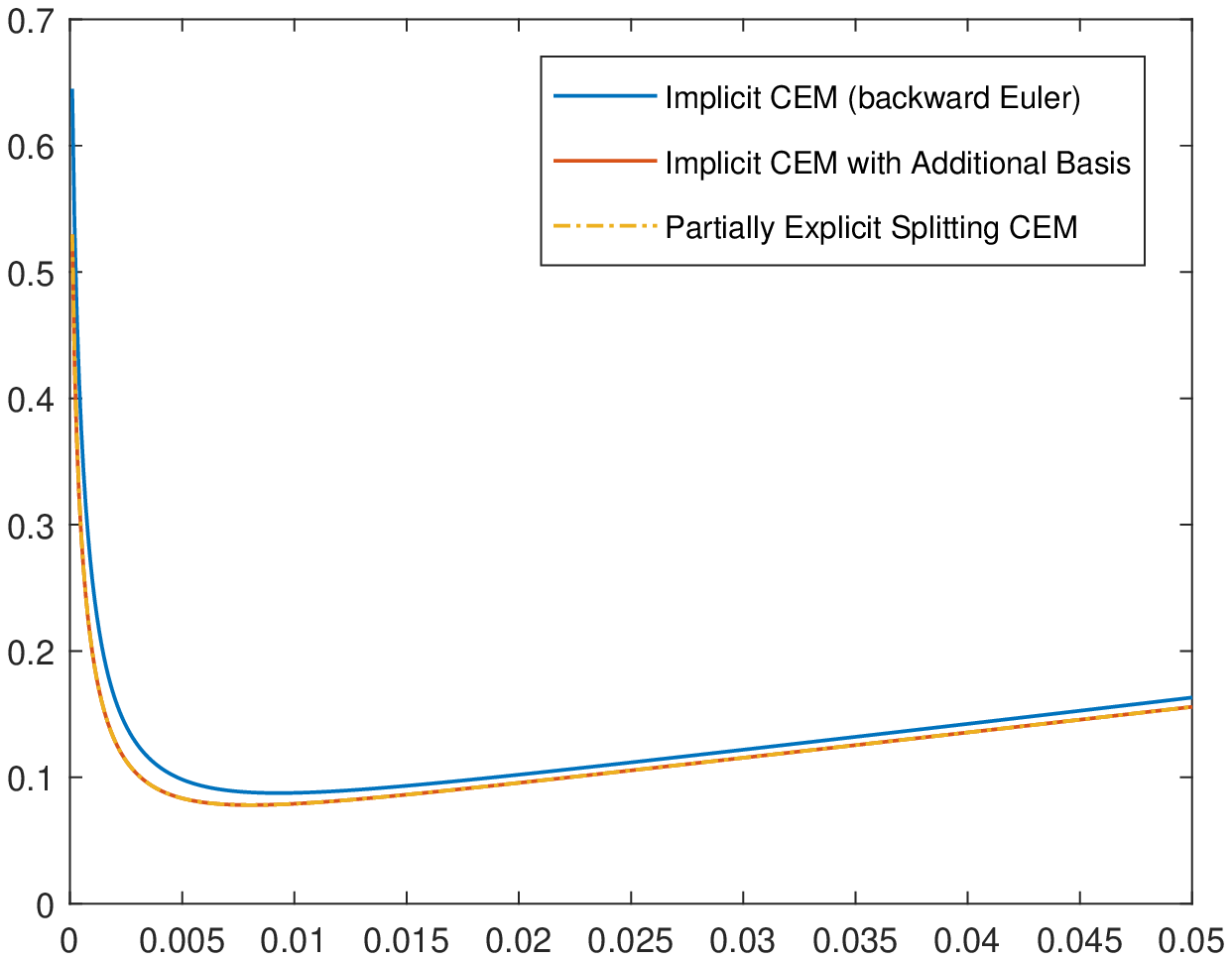}

\caption{Example 1. First type of $V_{2,H}$ (CEM Dof: $300$, $V_{2,H}$ Dof:
$243$). Left: $L_{2}$ error. Right: Energy error. Along $x$-axis is time, along $y$-axis is the relative error.}
\label{fig:results3.1}
\end{figure}

\begin{figure}[H]
\centering

\includegraphics[scale=0.4]{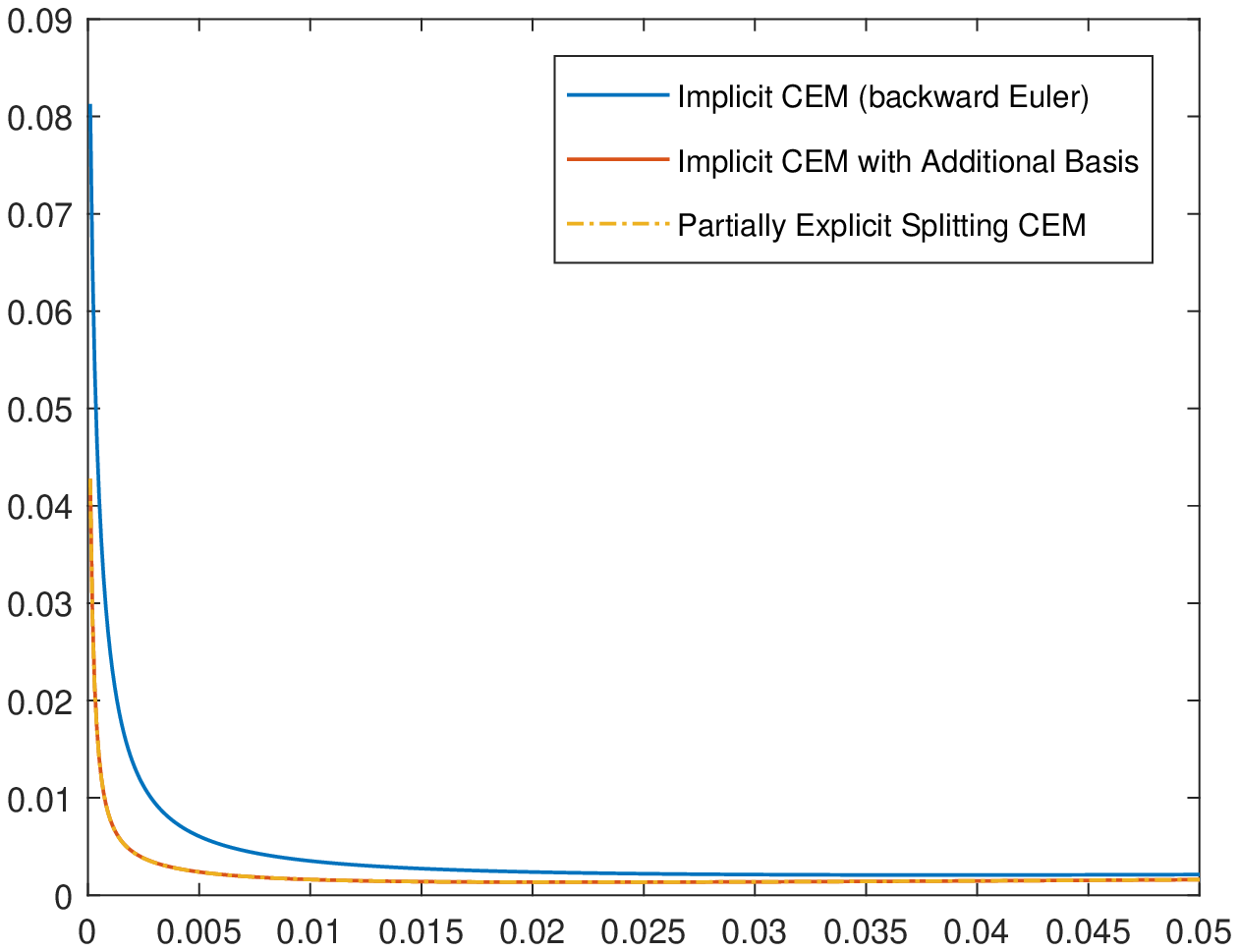} \includegraphics[scale=0.4]{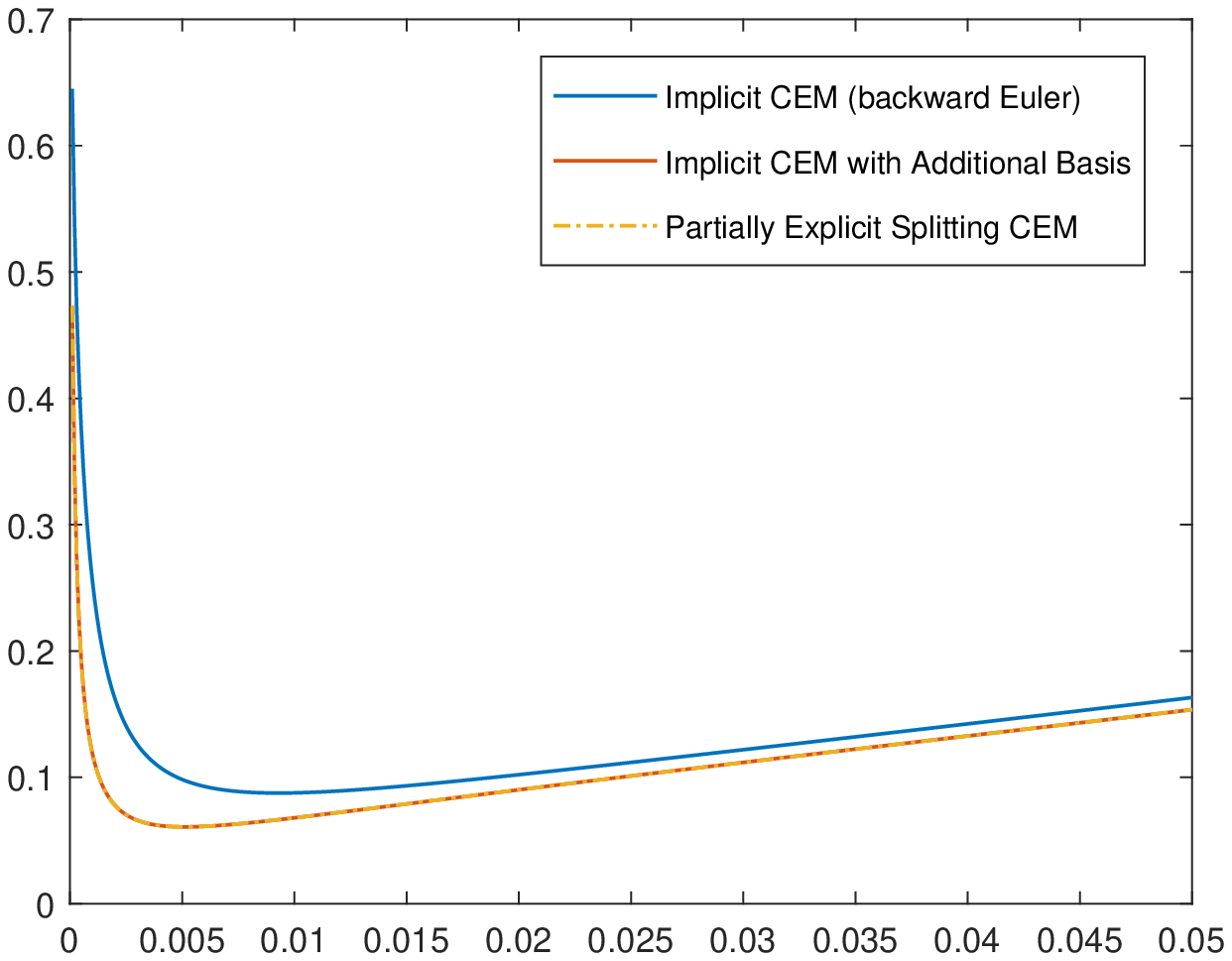}

\caption{Example 1. Second type of $V_{2,H}$ (CEM Dof: $300$, $V_{2,H}$
Dof: $300$). Left: $L_{2}$ error. Right: Energy error.  Along $x$-axis is time, along $y$-axis is the relative error.}
\label{fig:results3.2}
\end{figure}

\subsection*{Numerical Example 2}

The medium parameter $\kappa$, the reference solution at final time
$u_{ref}$, and the source term $f$ are shown in 
Figure \ref{fig:perms1}. We note that we intentionally choose
a singular source term so that CEM with additional basis functions
can give a substantial improvement as original multiscale CEM basis
functions do not take into account singular source term. In this case,
CEM-GMsFEM errors are large.
First, we numerically compute the constant $C_1$ from 
(\ref{eq:assumption}) that is
assumed to be independent of the contrast. 
 The result is shown in Table \ref{tab:constant1} . As
we see from this table that as we increase the contrast, this constant
remains constant, which asserts that our assumption is true. 
Next, present numerical results. 

In Figure \ref{fig:results1.1}, we present numerical results 
(the errors due to discretization),
when
$V_{2,H}$ is chosen as the first type. Again, we note that
because of singular source term, CEM with additional basis functions
will provide a visible improvement over CEM without using additional basis
functions. This is clear from the figure as we compare blue line
(CEM without additional basis functions) and other lines (which coincide)
that indicate results obtained using CEM with additional basis functions.
The two graphs that coincide correspond results using backward Euler
and partially explicit CEM-GMsFEM method with additional basis functions.
As we show that the errors are almost the same and thus, one can use 
our proposed approach with the time step independent of the contrast and
with partial explicit strategy. In Figure \ref{fig:results1.2},
we present results using  $V_{2,H}$ as the second type. As we see,
our results confirm that this space also provides numerical accuracy
as in the first type $V_{2,H}$.

\begin{table}[H]
\begin{tabular}{|c|c|c|c|c|c|}
\hline 
$\cfrac{\max\{\kappa\}}{\min\{\kappa\}}$ & $10^{5}$ & $10^{6}$ & $10^{7}$ & $10^{8}$ & $10^{9}$\tabularnewline
\hline 
$\sup_{v\in V_{1,H}} \mathcal{G}(v)$ & $4.11\times10^{5}$ & $4.11\times10^{6}$ & $4.11\times10^{7}$ & $4.11\times10^{8}$ & $4.11\times10^{9}$\tabularnewline
\hline 
First type of $V_{2,H}$. $\sup_{v\in V_{2,H}}\mathcal{G}(v)$ & $1.75\times10^{2}$ & $1.75\times10^{2}$ & $1.75\times10^{2}$ & $1.75\times10^{2}$ & $1.75\times10^{2}$\tabularnewline
\hline 
Second type of $V_{2,H}$. $\sup_{v\in V_{2,H}} \mathcal{G}(v)$ & $1.40\times10^{2}$ & $1.40\times10^{2}$ & $1.40\times10^{2}$ & $1.40\times10^{2}$ & $1.40\times10^{2}$\tabularnewline
\hline 
\label{tab:constant1}
\end{tabular}

\caption{Example 2. $\sup\cfrac{\|v\|_{a}^{2}}{\|v\|_{L^{2}}^{2}}$ for different
$\cfrac{\max\{\kappa\}}{\min\{\kappa\}}$. Here, we denote $\mathcal{G}(v)=\cfrac{\|v\|_{a}^{2}}{H^{-2}\|v\|_{L^{2}}^{2}}$.}
\end{table}

\begin{figure}[H]
\centering

\includegraphics[scale=0.35]{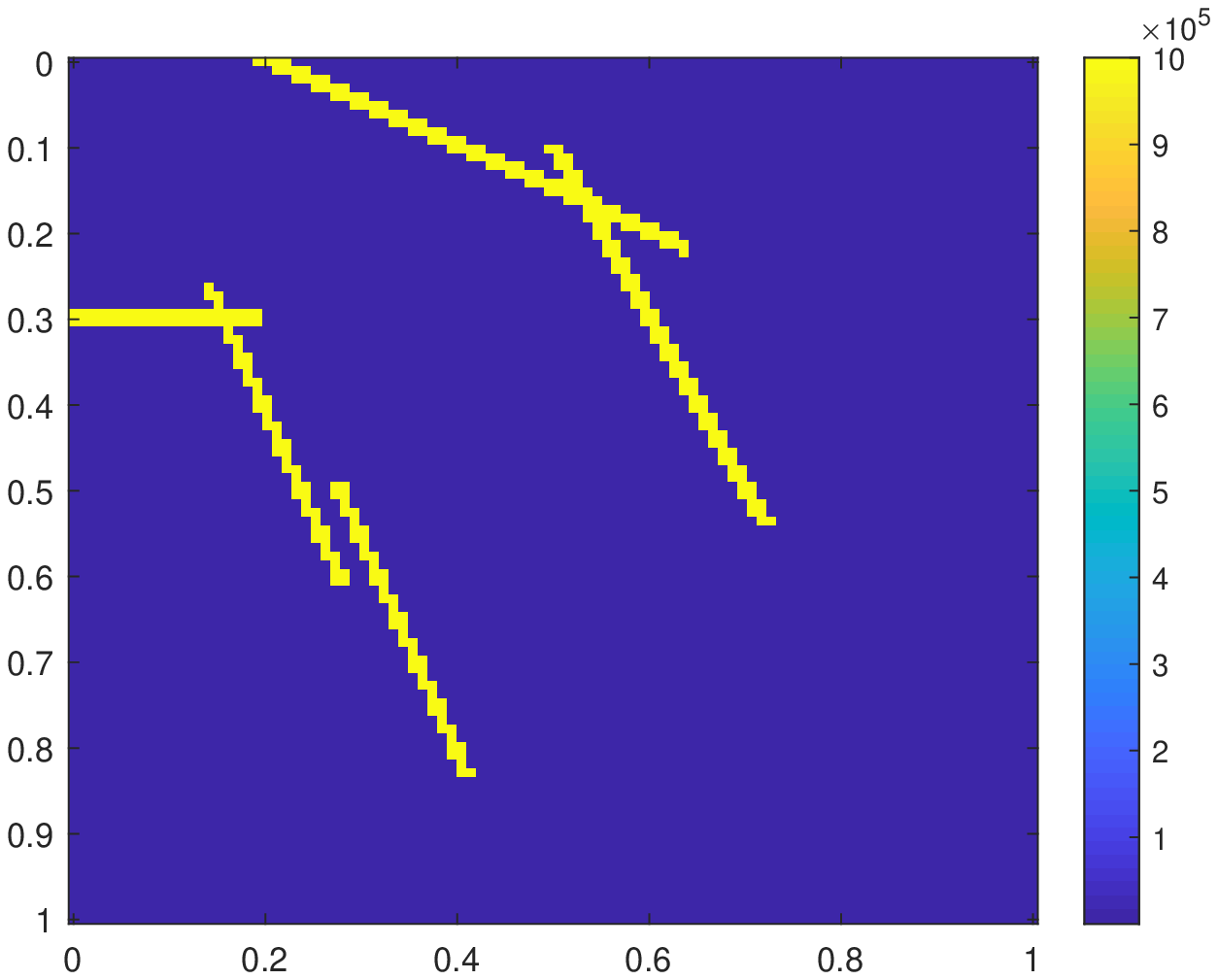} 
\includegraphics[scale=0.35]{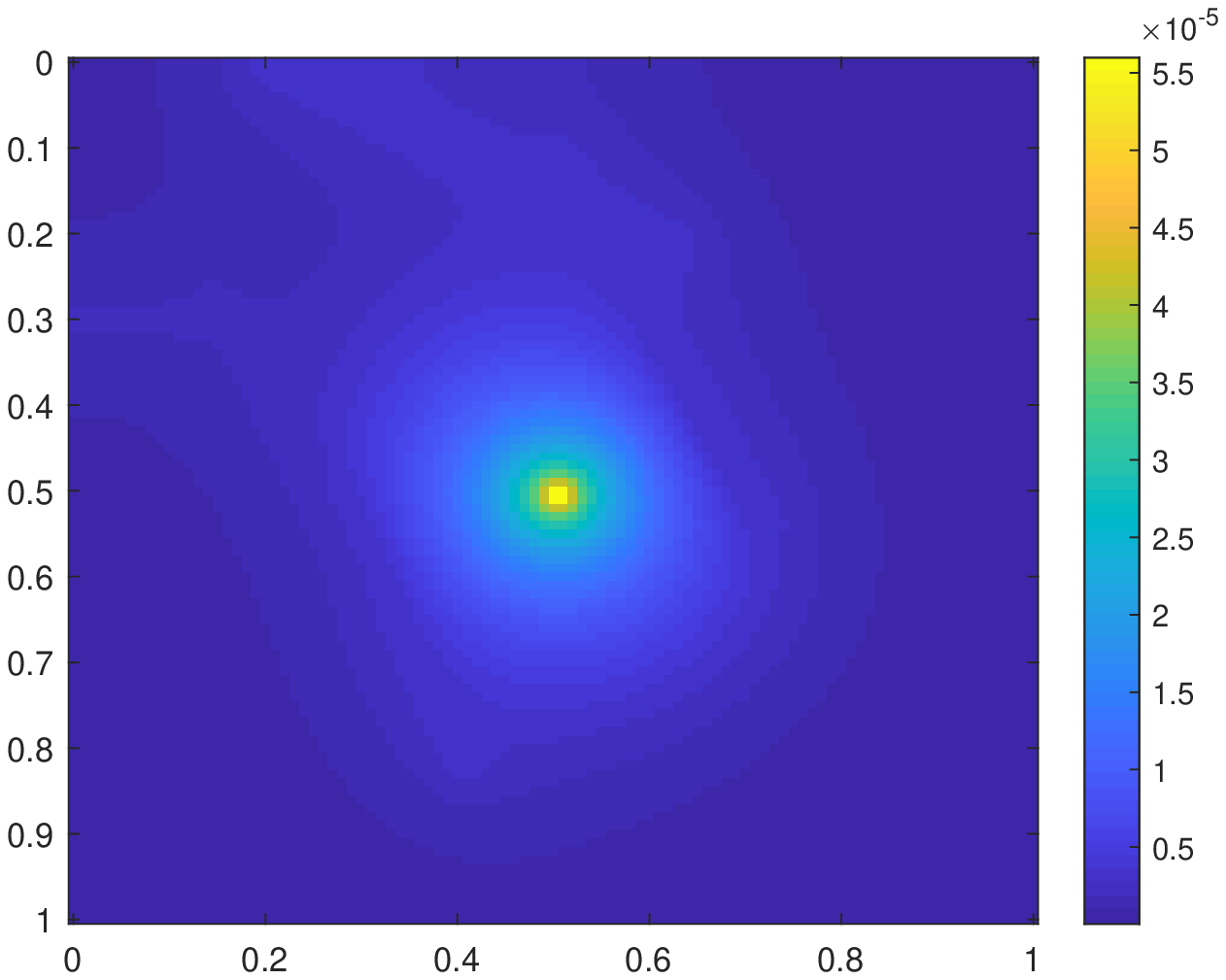}
\includegraphics[scale=0.35]{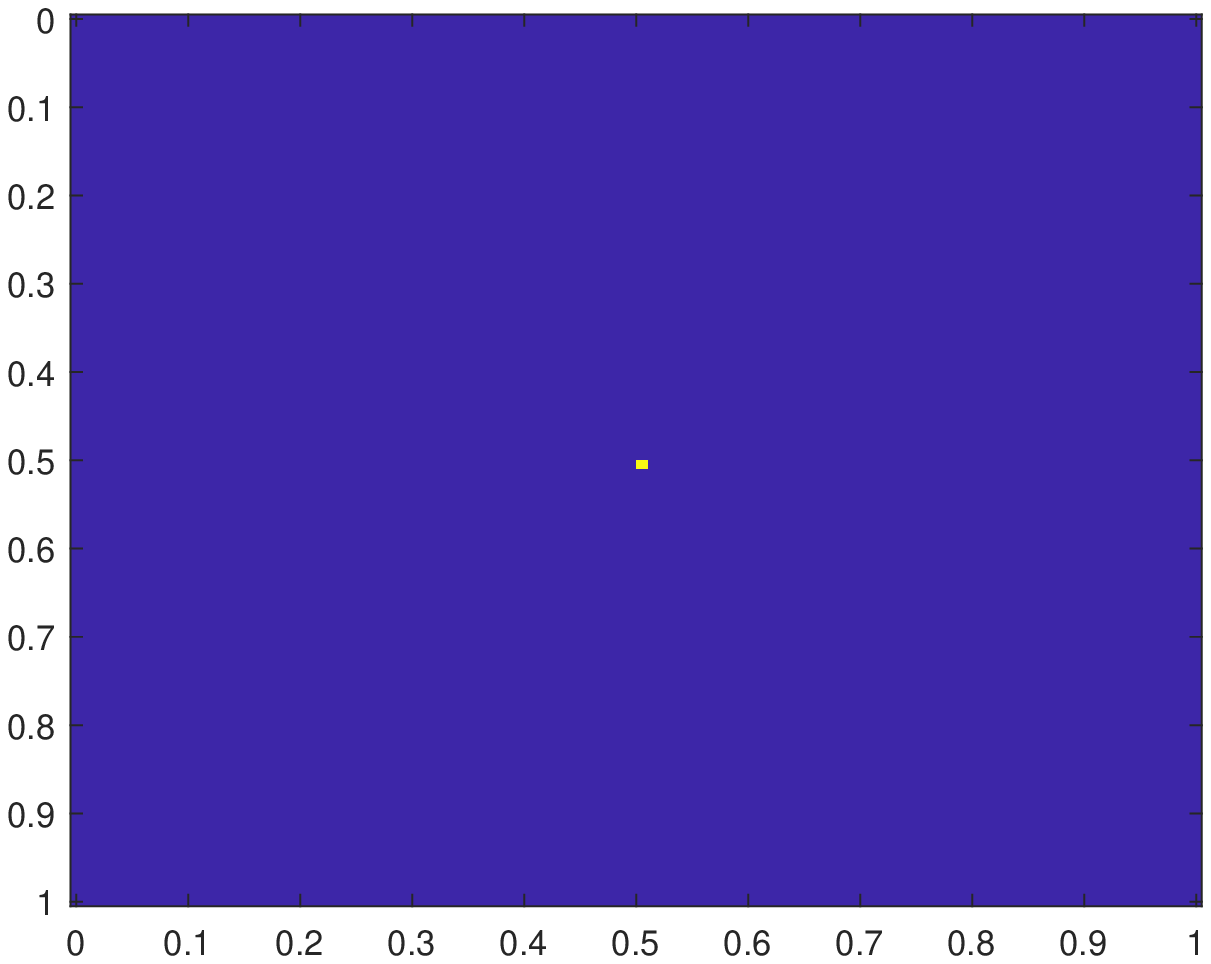}
\caption{Left: $\kappa$. Middle: reference solution at the final time. Right: $f$.}
\label{fig:perms1}
\end{figure}

\begin{figure}[H]
\centering

\includegraphics[scale=0.4]{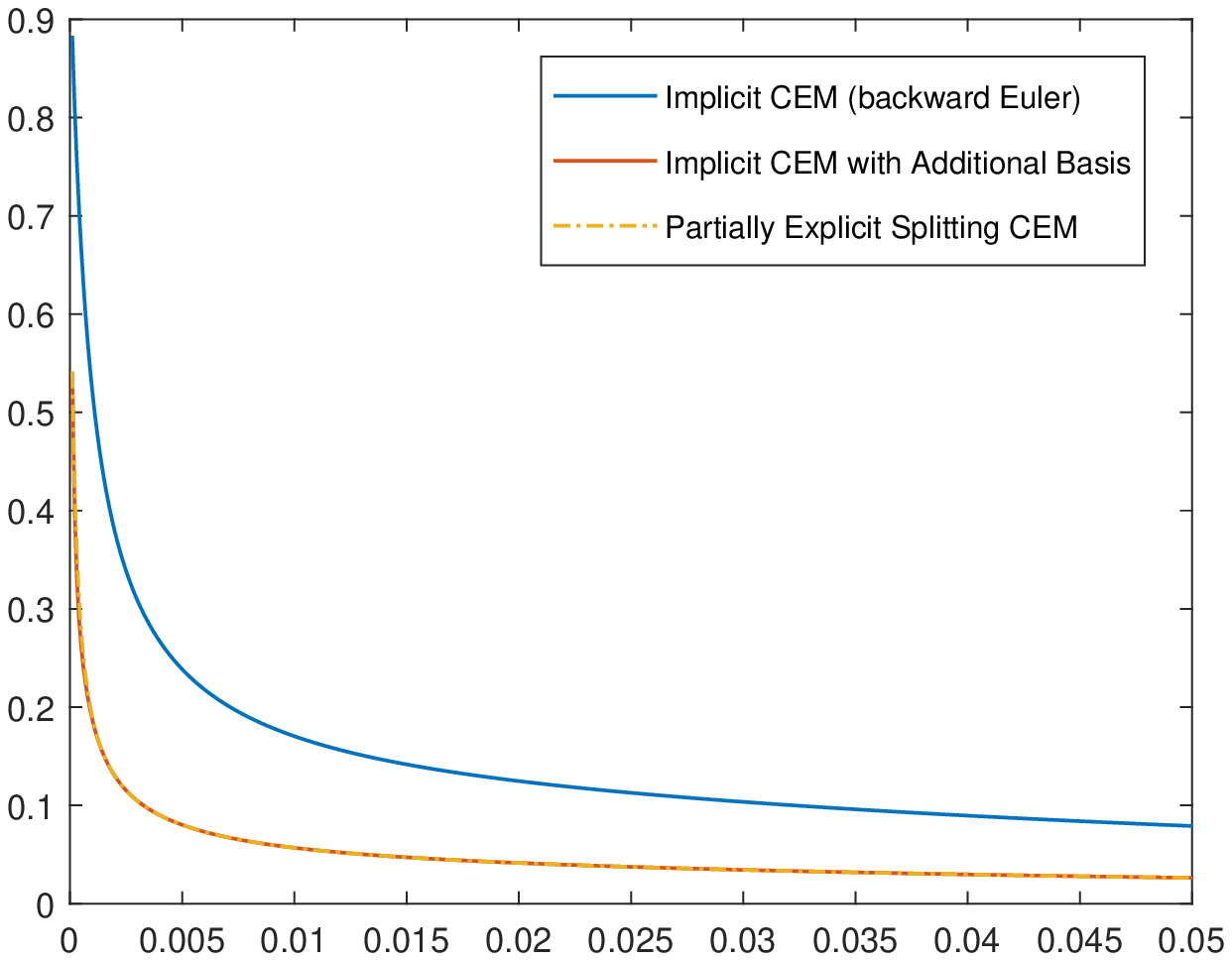} \includegraphics[scale=0.4]{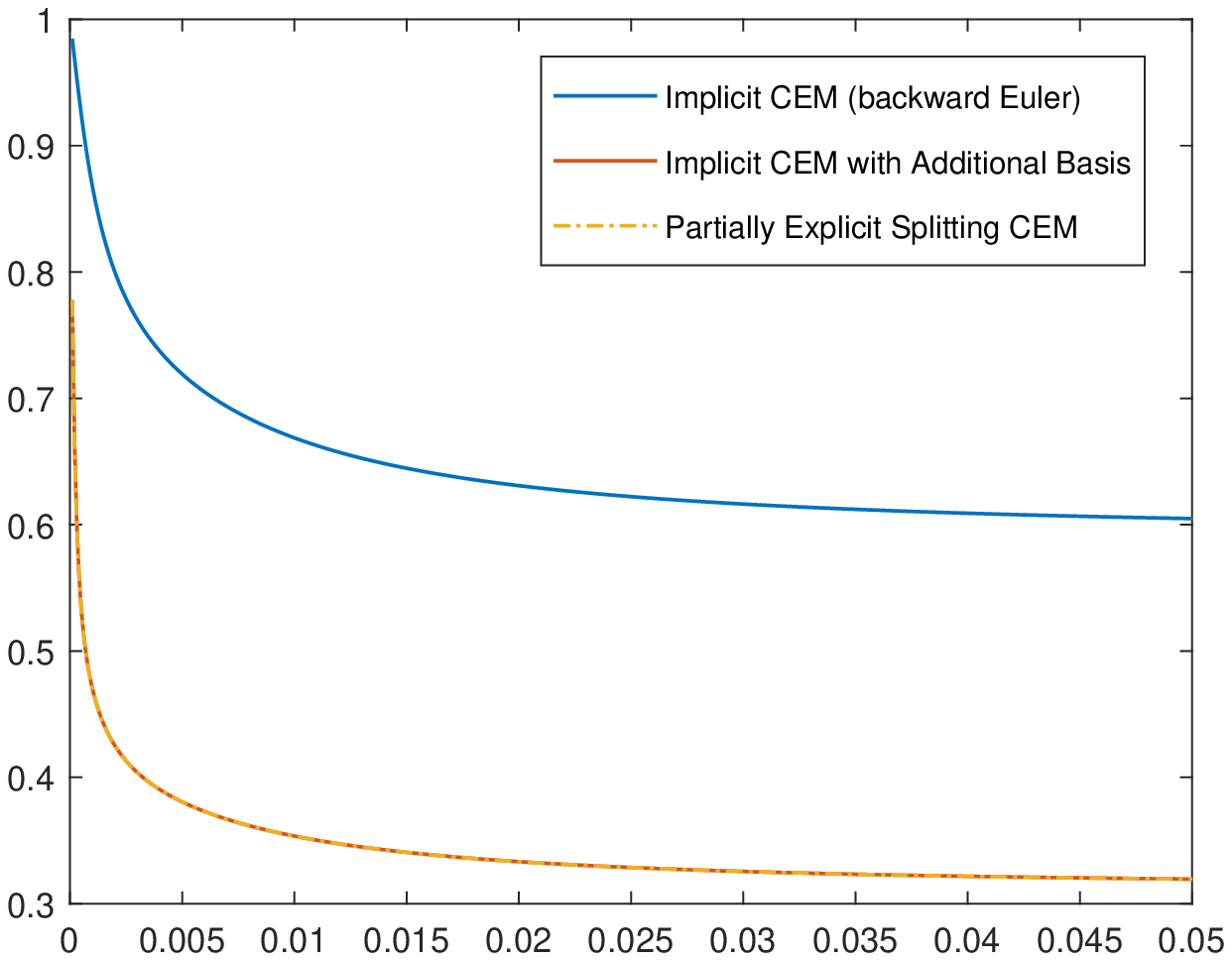}

\caption{Example 2. First type of $V_{2,H}$ (CEM Dof: $300$, $V_{2,H}$ Dof:
$243$). Left: $L_{2}$ error. Right: Energy error.  Along $x$-axis is time, along $y$-axis is the relative error.}
\label{fig:results1.1}
\end{figure}

\begin{figure}[H]
\centering

\includegraphics[scale=0.4]{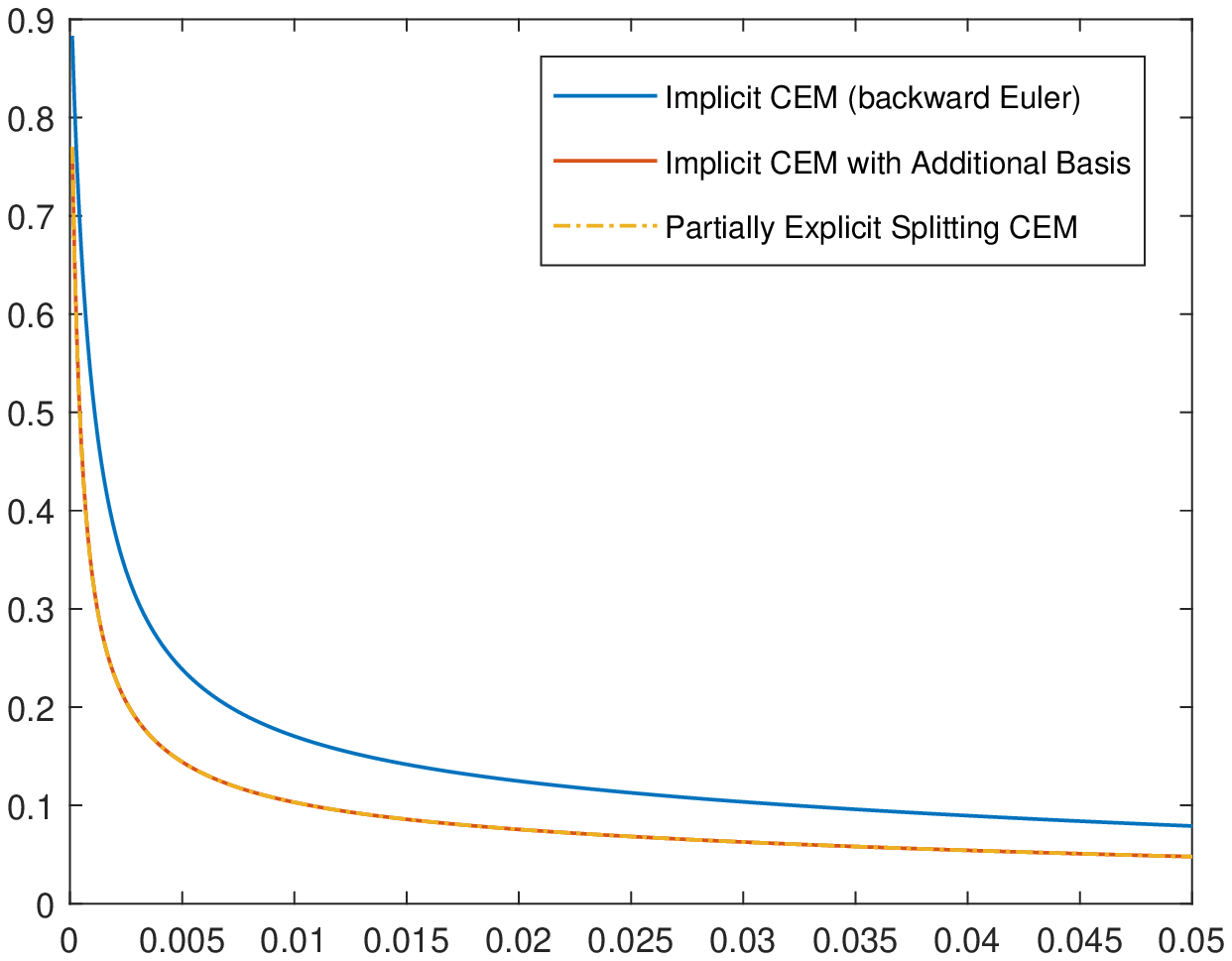} \includegraphics[scale=0.4]{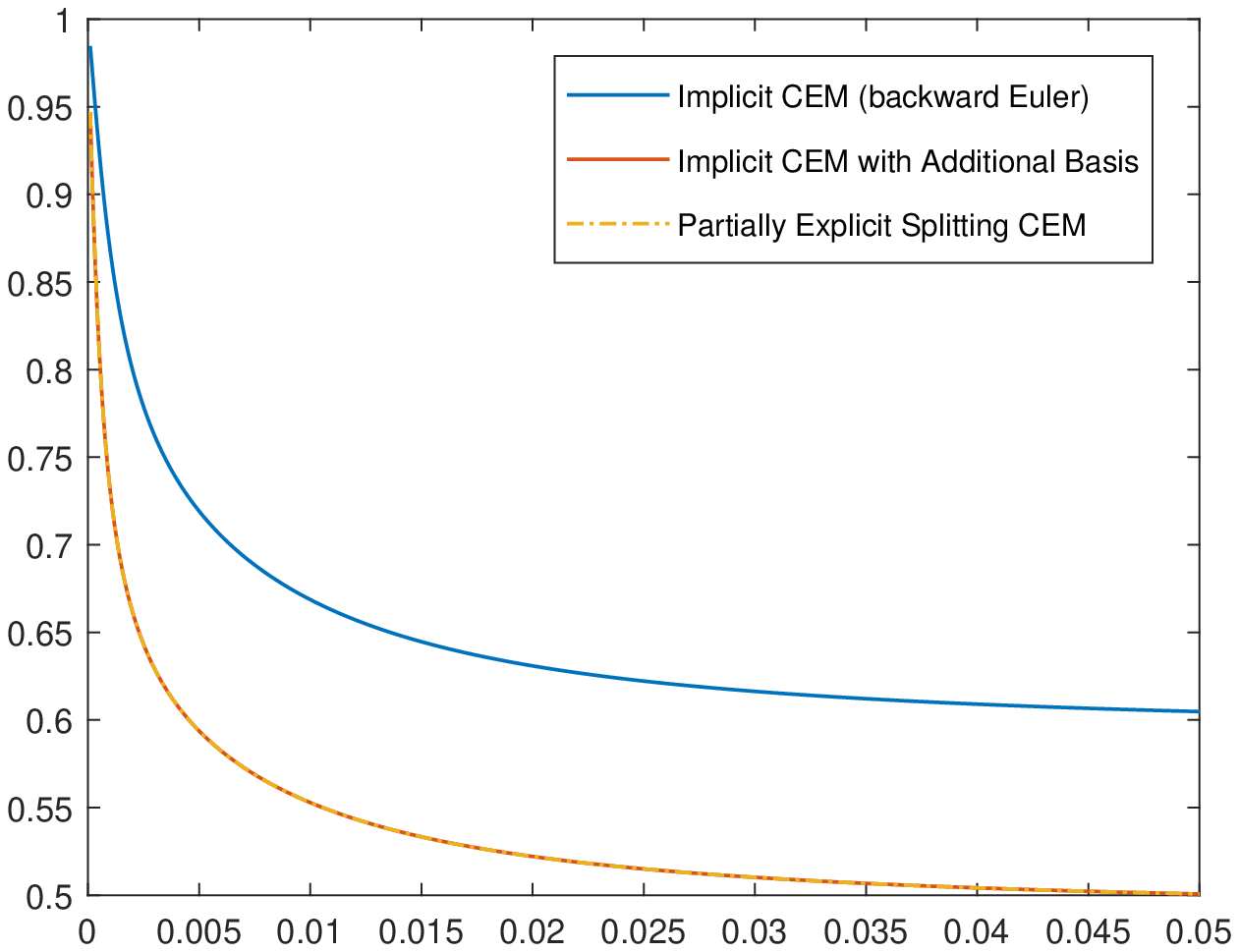}

\caption{Example 2. Second type of $V_{2,H}$ (CEM Dof: $300$, $V_{2,H}$
Dof: $300$). Left: $L_{2}$ error. Right: Energy error.  Along $x$-axis is time, along $y$-axis is the relative error.}
\label{fig:results1.2}
\end{figure}

\subsection*{Numerical Example 3}

For our final numerical test, we take more complicated permeability field
as shown in Figure \ref{fig:perms4} 
(more high conductivity streaks). In this figure,
we also depict  the reference solution at final time
$u_{ref}$, and the source term $f$ are shown in the following figure.
Because of 
a singular source term, as before, CEM-GMsFEM
 with additional basis functions
can give a noticeable improvement as original multiscale CEM basis
functions do not take into account singular source term. 
First, we numerically compute the constant from (\ref{eq:assumption}) that is
assumed to be contrast independent.  
The result is shown in Table \ref{tab:constant2}. As
we see from this table that as we increase the contrast, the constant
remains constant, which asserts that our assumption is true. 
Next, present numerical results.

Next, we present numerical results for two types of $V_{2,H}$, as before.
 We
briefly describe one of them as the results are similar.
In Figure \ref{fig:results4.1} (and Figure \ref{fig:results4.2}
for second type $V_{2,H}$), we present the errors ($L_2$ and energy 
errors) for CEM without additional basis functions (blue curve)
and CEM-GMsFEM with additional basis functions. 
We show CEM-GMsFEM with additional basis functions that use fully 
implicit setting and CEM-GMsFEM with those additional basis functions
that use partially explicit setting coincide.

\begin{figure}[H]
\centering

\includegraphics[scale=0.35]{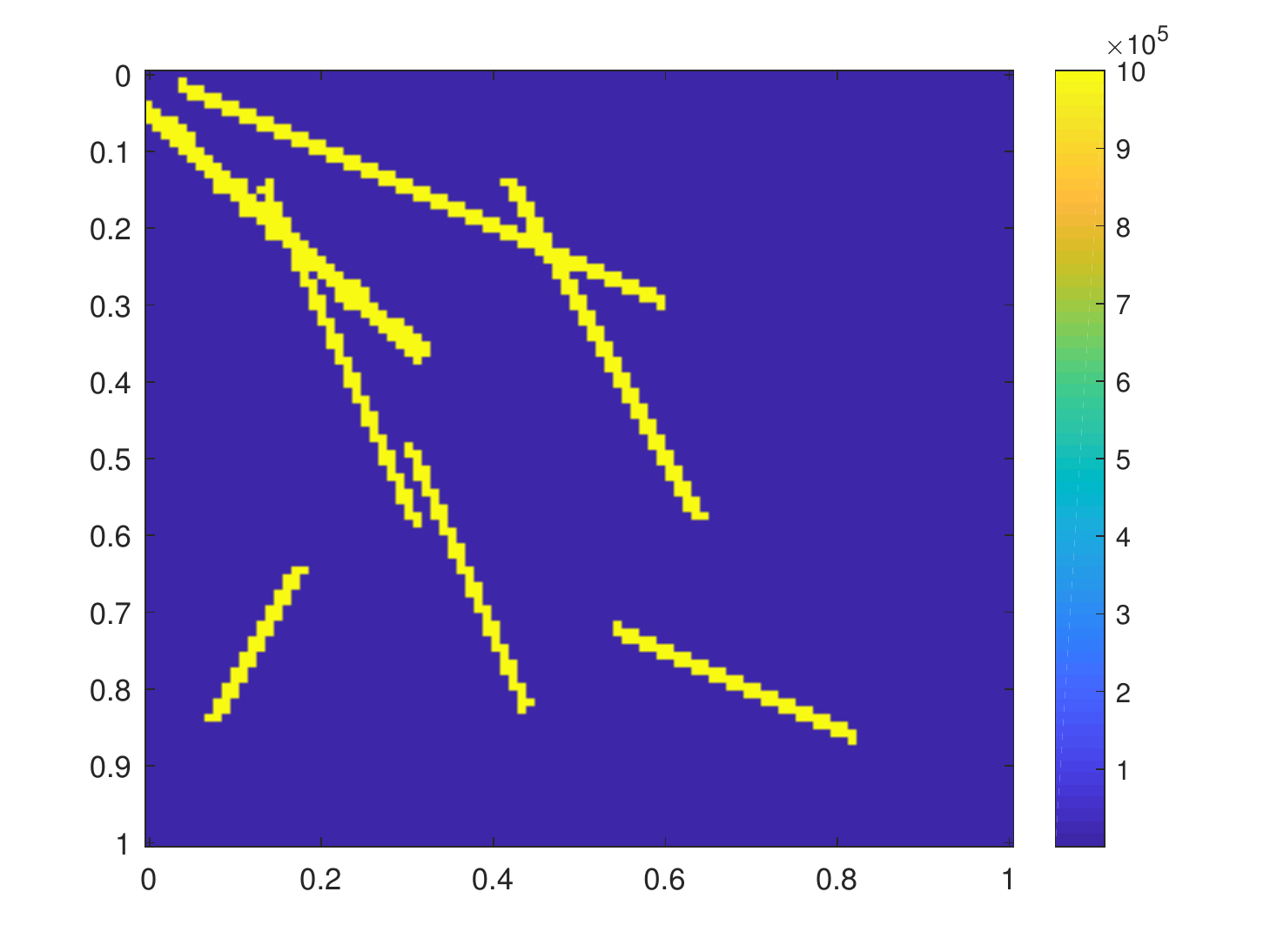} 
\includegraphics[scale=0.35]{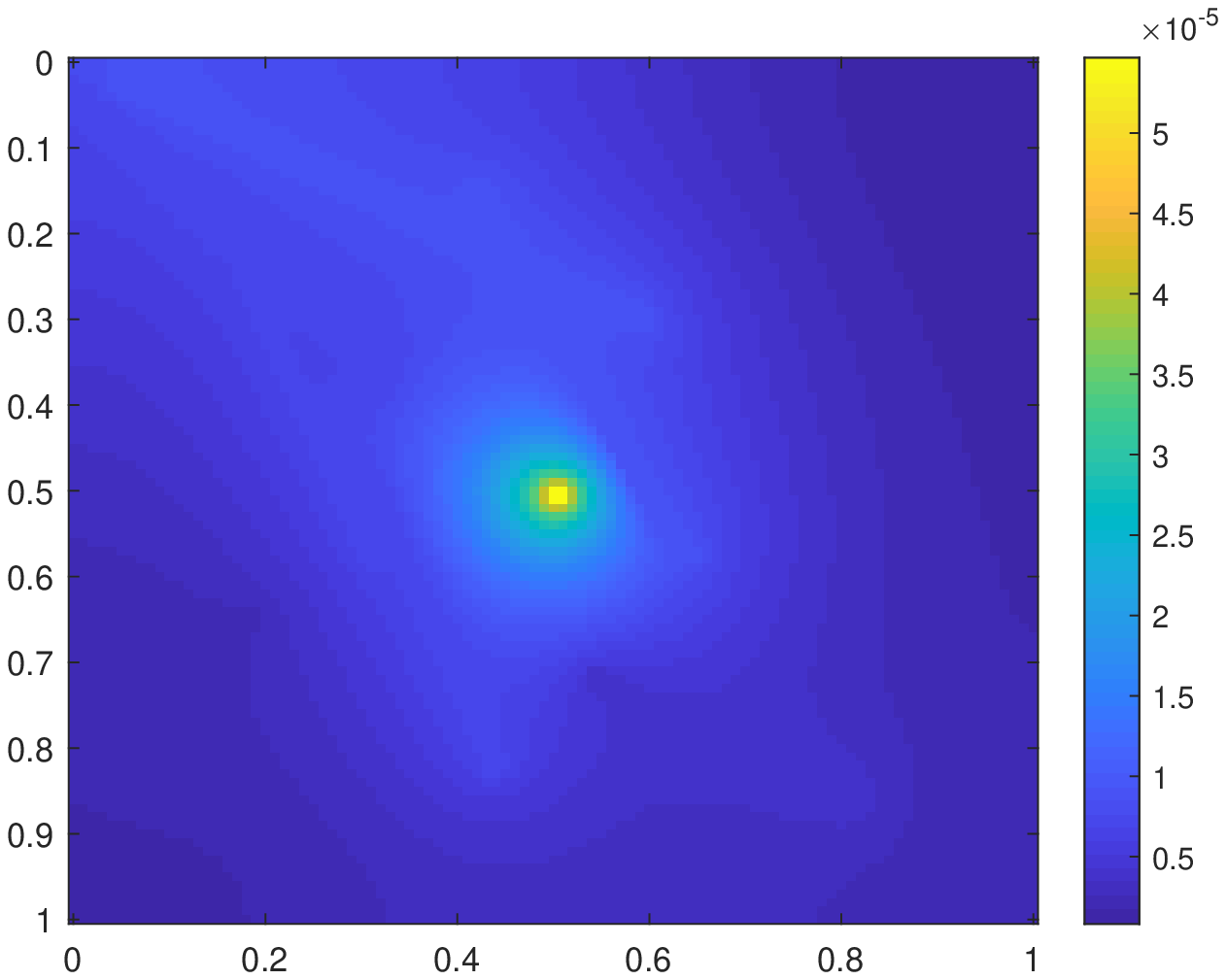}
\includegraphics[scale=0.35]{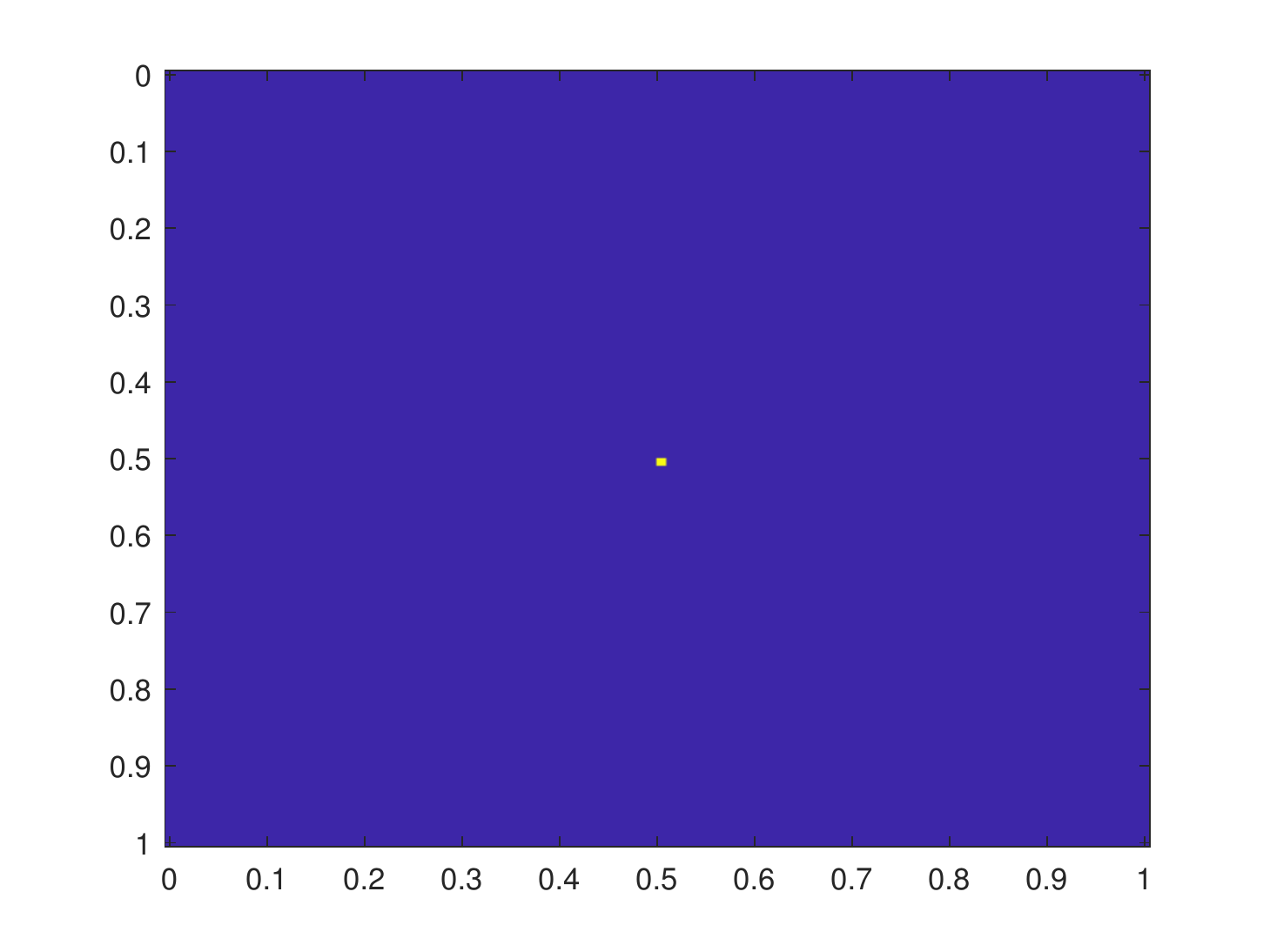}

\caption{Left: $\kappa$, Middle: reference solution at the final time. Right: $f$.}
\label{fig:perms4}
\end{figure}

\begin{table}[H]
\begin{tabular}{|c|c|c|c|c|c|}
\hline 
$\cfrac{\max\{\kappa\}}{\min\{\kappa\}}$ & $10^{5}$ & $10^{6}$ & $10^{7}$ & $10^{8}$ & $10^{9}$\tabularnewline
\hline 
$\sup_{v\in V_{1,H}} \mathcal{G}(v)$ & $1.13\times10^{6}$ & $1.13\times10^{7}$ & $1.13\times10^{8}$ & $1.13\times10^{9}$ & $1.13\times10^{10}$\tabularnewline
\hline 
First type of $V_{2,H}$. $\sup_{v\in V_{2,H}} \mathcal{G}(v)$ & $1.78\times10^{2}$ & $1.78\times10^{2}$ & $1.76\times10^{2}$ & $1.76\times10^{2}$ & $1.76\times10^{2}$\tabularnewline
\hline 
Second type of $V_{2,H}$. $\sup_{v\in V_{2,H}} \mathcal{G}(v)$ & $1.35\times10^{2}$ & $1.35\times10^{2}$ & $1.35\times10^{2}$ & $1.35\times10^{2}$ & $1.35\times10^{2}$\tabularnewline
\hline 
\end{tabular}

\caption{Example 2. $\sup\cfrac{\|v\|_{a}^{2}}{\|v\|_{L^{2}}^{2}}$ for different
$\cfrac{\max\{\kappa\}}{\min\{\kappa\}}$. Here, we denote $\mathcal{G}(v)=\cfrac{\|v\|_{a}^{2}}{H^{-2}\|v\|_{L^{2}}^{2}}$.}
\label{tab:constant2}
\end{table}

\begin{figure}[H]
\centering

\includegraphics[scale=0.4]{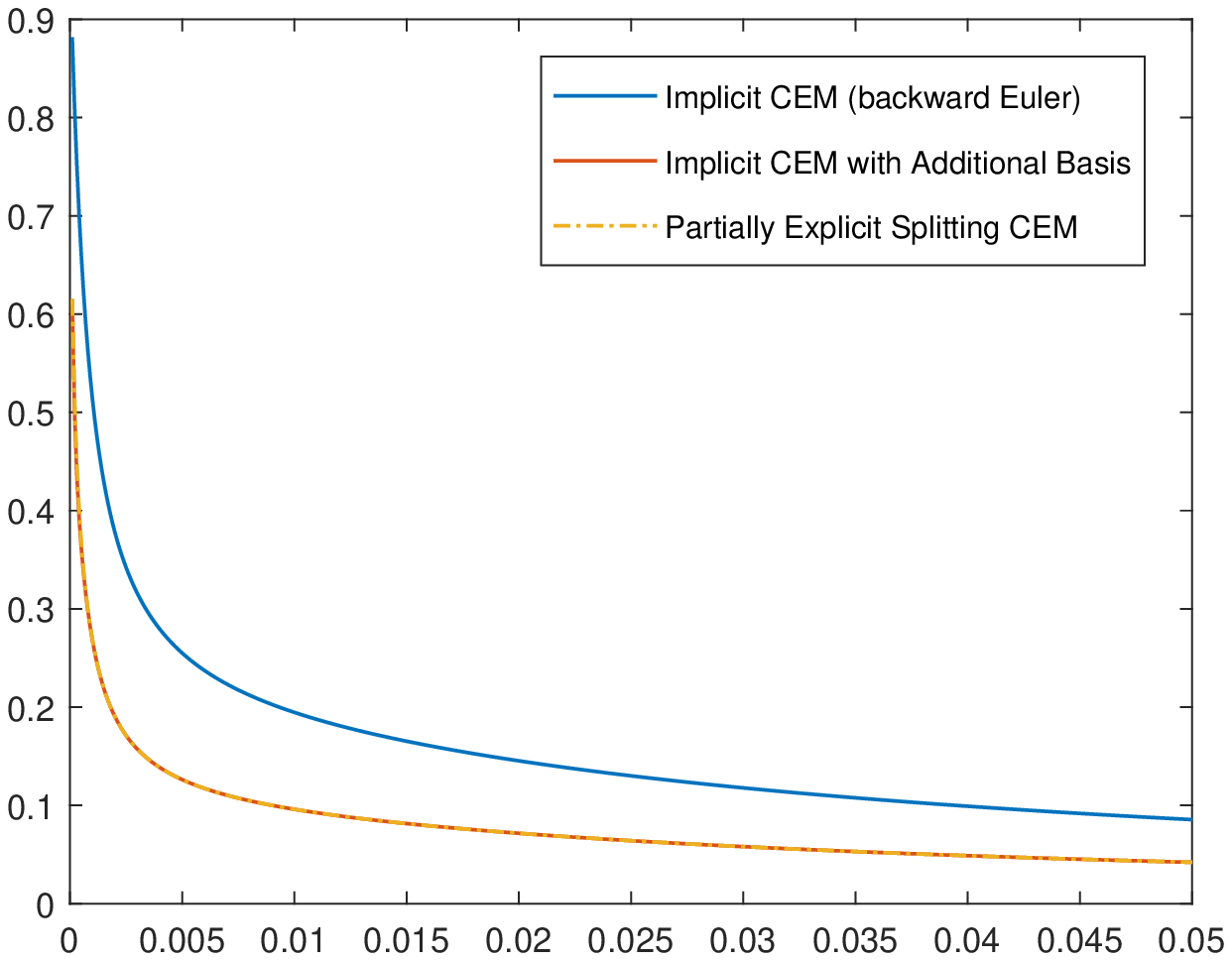} \includegraphics[scale=0.4]{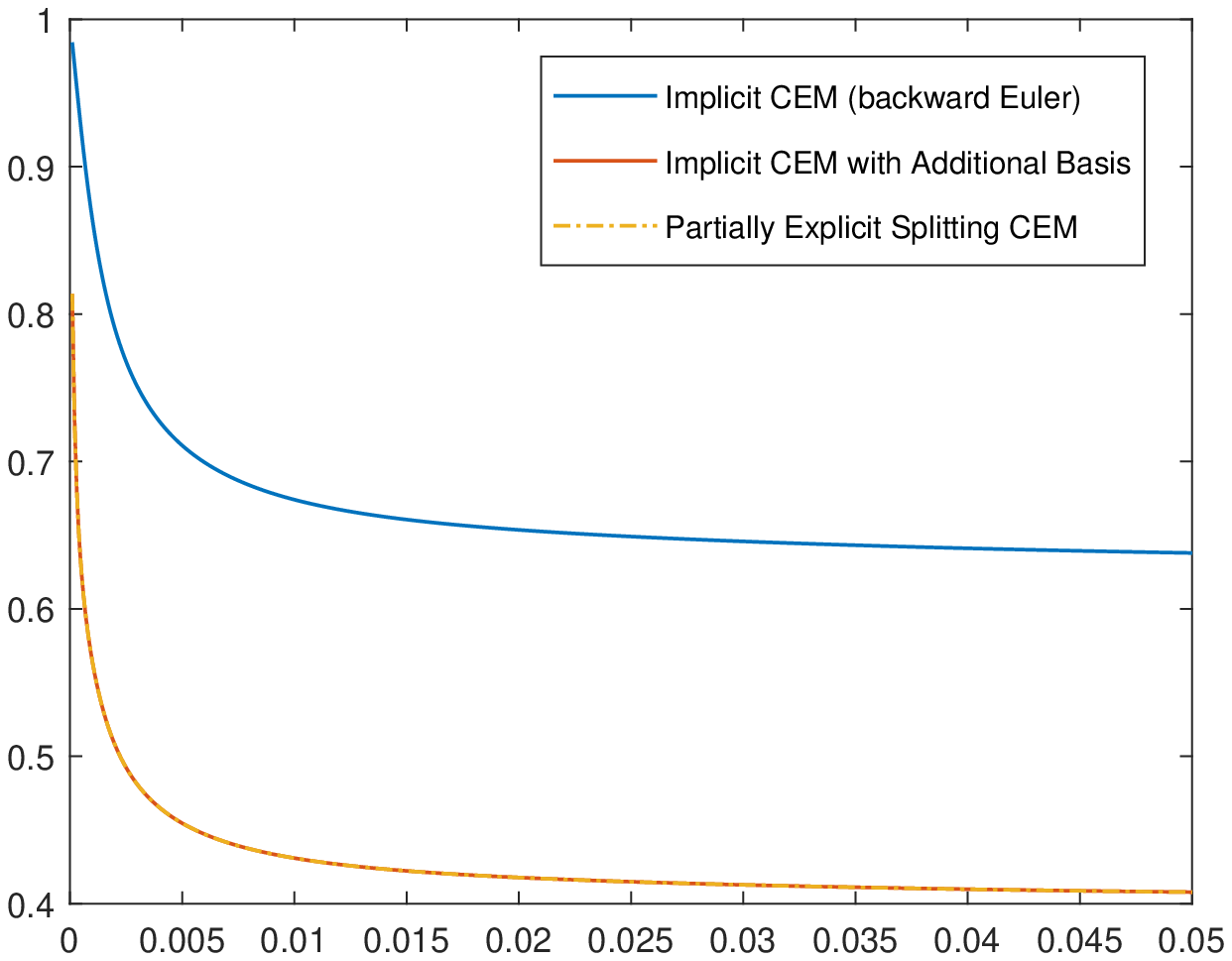}

\caption{Example 3. First type of $V_{2,H}$ (CEM Dof: $300$, $V_{2,H}$ Dof:
$243$). Left: $L_{2}$ error. Right: Energy error.  Along $x$-axis is time, along $y$-axis is the relative error.}
\label{fig:results4.1}
\end{figure}

\begin{figure}[H]
\centering

\includegraphics[scale=0.4]{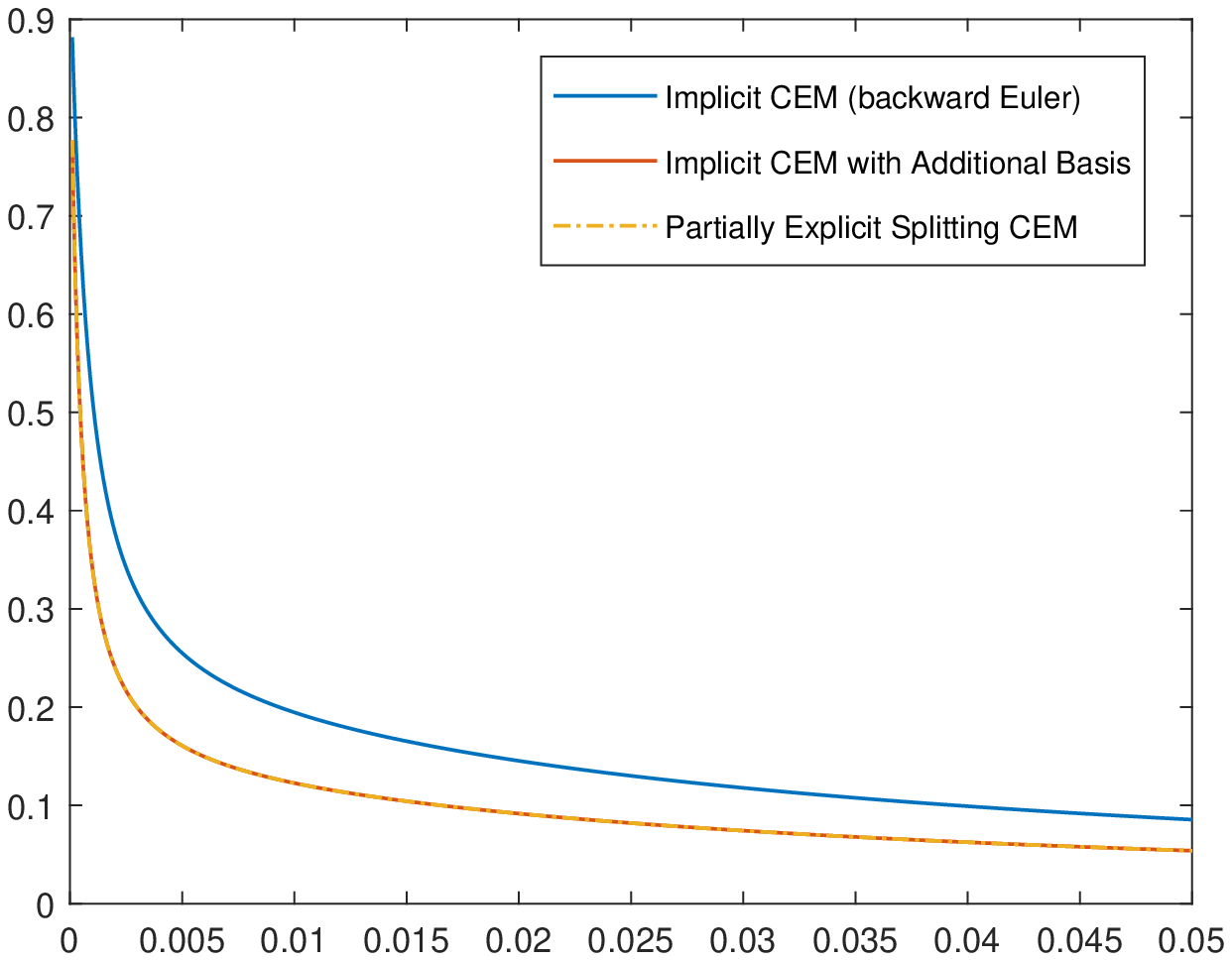} \includegraphics[scale=0.4]{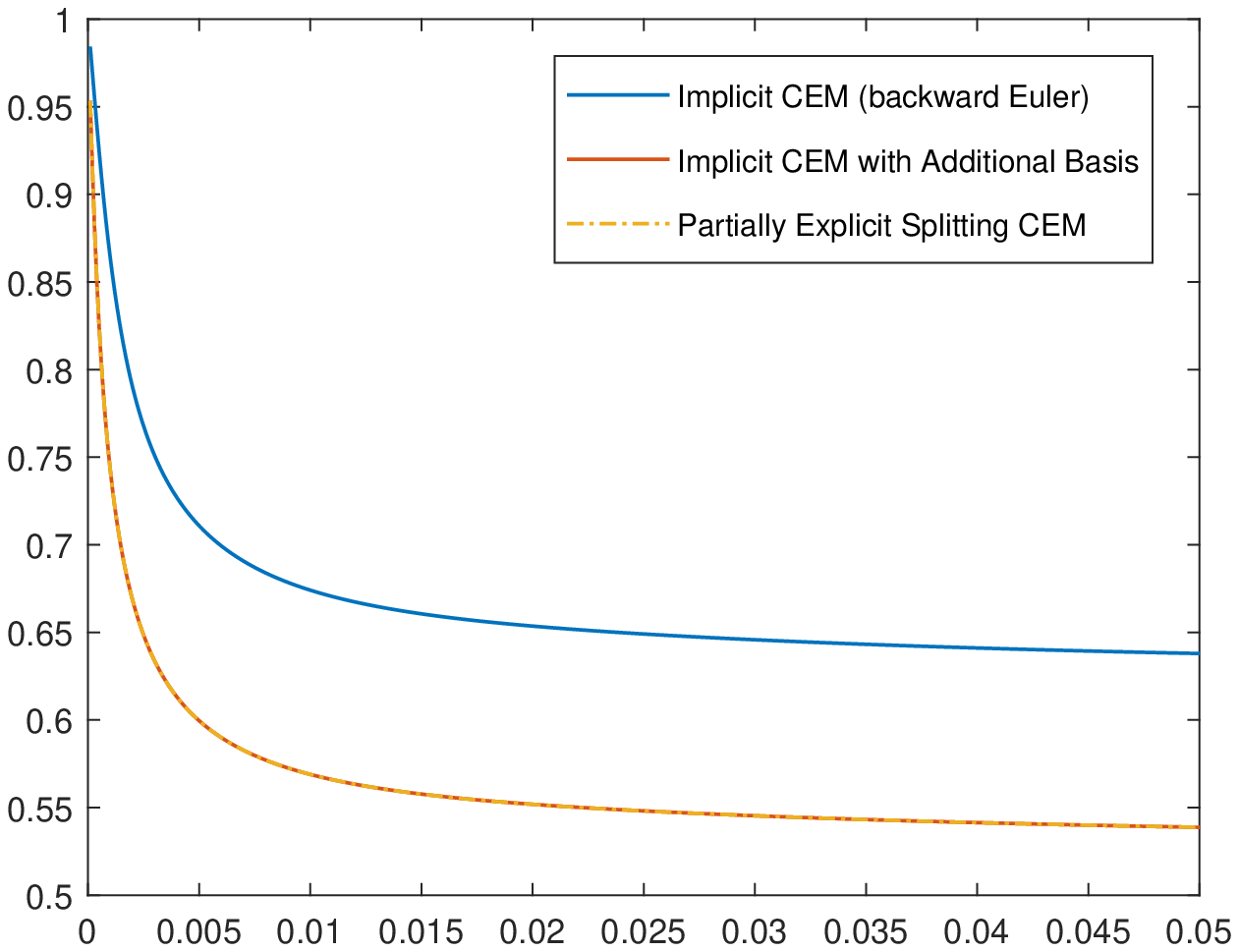}

\caption{Example 2. Second type of $V_{2,H}$ (CEM Dof: $300$, $V_{2,H}$
Dof: $300$). Left: $L_{2}$ error. Right: Energy error.  Along $x$-axis is time, along $y$-axis is the relative error.}
\label{fig:results4.2}
\end{figure}

\section{Conclusions}

In this paper, we study the development of temporal discretizations
that can use time stepping independent of the contrast. We consider
a parabolic equation, where the coefficient is multiscale and have high
contrast features. We propose a partially explicit method, where the proposed
method is stable with the time step that doesnt depend on the contrast.
The development of the proposed method requires special multiscale basis
construction and temporal splitting.
 Our coarse space consists of CEM-GMsFEM basis functions
and special multiscale basis functions for the remaining degrees of freedom
are constructed. The coarse-grid component of the solution 
(that has a few degrees of freedom) is solved
implicitly with explicit contributions from the rest. The remaining part
is updated in an explicit fashion within proposed splitting algorithms.
 We show that the resulting approach 
is stable with the time step is independent of the contrast. 
Appropriate multiscale decomposition of the space is needed
for the success of the approach as shown in the paper. 
We formulate sufficient conditions for the decomposition and construct
appropriate spatial decomposition. We present numerical results.
Our numerical results show that the proposed partial explicit methods
give almost the same accuracy as fully implicit method.

\section*{Acknowledgement}

The research of Eric Chung is partially supported by the Hong Kong RGC General Research Fund (Project numbers 14304719 and 14302018) and the CUHK Faculty of Science Direct Grant 2019-20.

\appendix\section{Motivation for $V_{H,2}$ based on approximation errors}

In this appendix, we discuss some motivations of the choices of $V_{H,2}$ based on error reduction viewpoint. 

\subsection{First choice}

We consider the first choice presented in Section \ref{sec:choice1}. 
For simplicity, we let $V_{H,1} = V_{glo}$, where $V_{glo}$ is the CEM space defined in (\ref{eq:glospace}). 
We consider the elliptic problem: find $u\in V$ such that 
\begin{equation*}
a(u,v) = (f,v), \quad\forall v\in V.
\end{equation*}
The corresponding multiscale problem is: find $u_H := u_{H,1}+u_{H,2} \in V_{H,1}+V_{H,2}$ such that
\begin{equation*}
a(u_H,v) = (f,v), \quad\forall v\in V_{H,1}+V_{H,2}.
\end{equation*}
Subtracting the above two equations, we obtain
\begin{equation*}
a(u-u_H,v) = 0, \quad \forall v\in V_{H,1}+V_{H,2}.
\end{equation*}
Recall that $V = V_{H,1} + \tilde{V}$,
$V_{H,1}$ and $\tilde{V}$ are $a$-orthogonal, and that $V_{H,2}\subset \tilde{V}$. So, we have
\begin{equation*}
a(u-u_{H,1},v) = 0,\quad \forall v\in V_{H,1},
\end{equation*}
which implies that $u-u_{H,1} \in \tilde{V}$. Taking the test function $v\in V_{H,2}$, we obtain
\begin{equation*}
a(u_{H,2},v) = a(u-u_{H,1}, v), \quad \forall v\in V_{H,2}.
\end{equation*}
From this equation, we see that $V_{H,2}$ provides a correction of the solution $u_{H,1}$ based on the residual
$a(u-u_{H,1},v)$.

To derive an error bound, we note that
\begin{equation*}
\|u-u_{H,1}\|_a^2 = a(u-u_{H,1},u-u_{H,1}) = a(u-u_{H,1},u) = (f,u-u_{H,1}).
\end{equation*}
Using the definition of the $s$-norm defined in Section \ref{sec:cem}, we have
\begin{equation*}
(f,u-u_{H,1}) \leq CH \| \kappa^{-\frac{1}{2}} f\| \|u-u_{H,1}\|_s.
\end{equation*}
Using the fact that $u-u_{H,1} \in \tilde{V}$ and the spectral problem (\ref{eq:spectralCEM}), we obtain 
\begin{equation*}
 \|u-u_{H,1}\|_s^2 \leq ( \min_i  \lambda_{L_i+1}^{(i)} )^{-1} \| u-u_{H,1}\|_a^2.
\end{equation*}
Combining the results, we obtain the following energy norm error bound
\begin{equation*}
\| u - u_{H,1}\|_a \leq CH( \min_i  \lambda_{L_i+1}^{(i)} )^{-\frac{1}{2}} \| \kappa^{-\frac{1}{2}} f\|.
\end{equation*}
To get a $L^2$ error bound, we consider the dual problem: given $g\in L^2(\Omega)$, 
 find $z\in V$ such that 
\begin{equation*}
a(v,z) = (g,v), \quad\forall v\in V.
\end{equation*}
The corresponding multiscale problem is given by: find $z_{H} \in V_{H,1}$ such that 
\begin{equation*}
a(v,z_H) = (g,v), \quad\forall v\in V_{H,1}.
\end{equation*}
Let $g = \kappa (u-u_{H,1})$. We have
\begin{equation*}
\begin{split}
\| \kappa^{\frac{1}{2}} (u-u_{H,1}) \|^2
&= (g, u-u_{H,1}) = a(u-u_{H,1},z)
= a(u-u_{H,1}, z-z_H) \\
&\leq \|u-u_{H,1}\|_a \|z-z_H\|_a
\leq C^2H^2 ( \min_i  \lambda_{L_i+1}^{(i)} )^{-1} \| \kappa^{-\frac{1}{2}} f\| \| \kappa^{\frac{1}{2}} (u-u_{H,1})\|.
\end{split}
\end{equation*}
So, we obtain
\begin{equation*}
\| \kappa^{\frac{1}{2}} (u-u_{H,1}) \| \leq C^2H^2 ( \min_i  \lambda_{L_i+1}^{(i)} )^{-1} \| \kappa^{-\frac{1}{2}} f\|.
\end{equation*}

Now, we derive the full error $\|u-(u_{H,1}+u_{H,2})\|_a$. Note that
$u_{H,2}$ is the $a$-orthogonal projection of $u-u_{H,1}$ in the space $V_{H,2}$. So,
\begin{equation*}
\|u-(u_{H,1}+u_{H,2})\|_a \leq \| u - u_{H,1} - v\|_a,  \quad\forall v\in V_{H,2}.
\end{equation*}
Assume the domain $\Omega$ is rectangular and the mesh $\mathcal{T}_H$ is a regular grid. 
Let $\{\chi_i\}$ be a set of smooth partition of unity functions corresponding to the overlapping partition $\cup \{ \omega_i\}$ of $\Omega$ with $|\nabla\chi_i | \leq CH^{-1}$, 
and that each $\chi_i$ has zero trace on $\partial \omega_i$. We write
\begin{equation*}
u-u_{H,1} = \sum_i \chi_i (u-u_{H,1}) = \sum_i r_i
\end{equation*}
where $r_i = \chi_i (u-u_{H,1})$.
We assume that the eigenfunctions of (\ref{eq:eigenvalueproblem_case2}) forms a complete basis, so that each $r_i$ can be represented by
\begin{equation*}
r_i = \sum_{j} a(r_i, \xi_j^{(i)}) \xi_j^{(i)}
\end{equation*}
where we also assume the normalized condition $a(\xi_j^{(i)}, \xi_j^{(i)})=1$. We define
\begin{equation*}
v :=  \sum_i v_i := \sum_i \sum_{j \leq J_i} a(r_i, \xi_j^{(i)}) \xi_j^{(i)} \in V_{H,2}.
\end{equation*}
So, we have
\begin{equation*}
\|u-(u_{H,1}+u_{H,2})\|^2_a \leq 4 \sum_i  \| r_i - v_i\|_a^2.
\end{equation*}
Define $0\leq\theta \leq 1$ by
\begin{equation*}
\theta = \max_i \cfrac{\| r_i - v_i\|_a}{\| r_i\|_a}
\end{equation*}
which represents relative reduction of error. 
Notice that
\begin{equation*}
\sum_i \|r_i\|_a^2 \leq 8 \| u-u_{H,1}\|_a^2 + 8 \| u-u_{H,1}\|_s^2 \leq 
16 C^4 H^2 ( \min_i  \lambda_{L_i+1}^{(i)} )^{-2} \| \kappa^{-\frac{1}{2}} f\|^2.
\end{equation*}
Combining all results, we obtain the following error bound
\begin{equation*}
\|u-(u_{H,1}+u_{H,2})\|_a \leq C_0 \theta H( \min_i  \lambda_{L_i+1}^{(i)} )^{-1} \| \kappa^{-\frac{1}{2}} f\|.
\end{equation*}

\subsection{Second choice}

We consider the second choice in this section. 
We consider an estimate of the elliptic projection
of $V_{glo}+V_{glo,2}$ with second choice of $V_{glo,2}$. Specifically, we assume
$u_{1}\in V_{glo}$, $u_{2}\in V_{glo,2}$ and $u\in V$ satisfy
\[
a(u,v)=(f,v),\;\;\forall v\in V,
\]
and
\[
a(u_{1}+u_{2},v_{1}+v_{2})=(f,v_{1}+v_{2}), \;\;\forall v_{1}\in V_{glo},v_{2}\in V_{glo,2}.
\]
We define $\overline{V}=\{v\in V|\;\tilde{\Pi}(v)=0\}$ and we can easy
check that $\overline{V}$ is $a$-orthogonal to $V_{glo,1}+V_{glo,2}$,
namely $(V_{glo}+V_{glo,2})\subset\overline{V}^{\perp_{a}}$.
where $\overline{V}^{\perp_{a}}$ is the orthogonal complement of $\overline{V}$ with respect to the $a(\cdot,\cdot)$ inner product.
By counting
the dimension of $\overline{V}^{\perp_{a}}$ and $V_{glo}+V_{glo,2}$,
we have 
\[
(V_{glo,1}+V_{glo,2})=\overline{V}^{\perp_{a}}\text{ and }(V_{glo}+V_{glo,2})^{\perp_{a}}=\overline{V}.
\]
Since
\[
a(u-u_{1}-u_{2},v)=0,\;\;\forall v\in V_{glo}+V_{glo,2},
\]
we have 
\[
u-u_{1}-u_{2}\in(V_{glo}+V_{glo,2})^{\perp_{a}}=\overline{V}.
\]
Thus, we have 
\begin{align*}
a(u-u_{1}-u_{2},u-u_{1}-u_{2}) & =a(u,u-u_{1}-u_{2})=(f,u-u_{1}-u_{2})\\
 & \leq\|f\| \|(u-u_{1}-u_{2})\|.
\end{align*}
Note that $\overline{V} \subset \tilde{V}$. 
Since $u-u_{1}-u_{2}\in\overline{V}$, we can write $u-u_{1}-u_{2}$ in terms of the eigenfunctions of (\ref{eq:spectralCEM2}),
 \[
 u-u_{1}-u_{2} = \sum_{i=1}\sum^{\infty}_{j=1} a^{(i)}_j\xi^{(i)}_j.
 \]
 Since $u-u_{1}-u_{2}\in\overline{V} \subset \tilde{V}$, we have
 \[
 a^{(i)}_j =0 \;\forall j\leq J_i,
 \]
 which implies
 \[
 u-u_{1}-u_{2} = \sum_{i=1}\sum^{\infty}_{j=J_i+1} a^{(i)}_j\xi^{(i)}_j.
 \]
 So, we have
 \begin{align*}
H^{-2}\|(u-u_{1}-u_{2})\|^2_{L^{2}} & =\sum_{i=1}\sum^{\infty}_{j=J_i+1} (a^{(i)}_j)^2\leq \sum_{i=1}(\gamma^{(i)}_{J_i+1})^{-1}\sum^{\infty}_{j=J_i+1} \gamma^{(i)}_j(a^{(i)}_j)^2\\
 & \leq (\min_i \gamma^{(i)}_{J_i+1})^{-1}\sum_{i=1}\sum^{\infty}_{j=J_i+1} \gamma^{(i)}_j(a^{(i)}_j)^2\\
 &\leq (\min\{\gamma_{J_{i}+1}^{(i)}\})^{-1}\|u-u_{1}-u_{2}\|^2_{a}
\end{align*}
and
\begin{align*}
\|(u-u_{1}-u_{2})\|_{L^{2}} &  \leq\cfrac{H}{(\min\{\gamma_{J_{i}+1}^{(i)}\})^{\frac{1}{2}}}\|u-u_{1}-u_{2}\|_{a}.
\end{align*}
Therefore, we have 
\[
\|u-u_{1}-u_{2}\|_{a}\leq\cfrac{H\|f\|_{L^{2}}}{(\min\{\gamma_{J_{i}+1}^{(i)}\})^{\frac{1}{2}}}.
\]
We remark that for the standard CEM method, we have the error estimate
\[
\|u-u_{1}-u_{2}\|_{a}\leq\cfrac{H\|\kappa^{-\frac{1}{2}}f\|_{L^{2}}}{(\min\{\lambda_{L_{i}+1}^{(i)}\})^{\frac{1}{2}}}.
\]

By the definition of the $s$-norm, we have
\[
\|v\|^2_s \geq C\kappa_{min}H^{-2}\|v\|^2
\]
where we use the fact that $\sum_i|\nabla \chi_i|^2\leq CH^{-2}$. So,
\[
\gamma_{J_{i}+1}^{(i)}\geq\gamma_{1}^{(i)}
=\min_{v\in\tilde{V}}\cfrac{\|v\|_{a}^{2}}{H^{-2}\|v\|_{L^{2}}^{2}}
 \geq C\kappa_{min}\min_{v\in\tilde{V}}\cfrac{\|v\|_{a}^{2}}{\|v\|_{s}^{2}} 
=C\kappa_{min}\lambda_{L_{i}+1}^{(i)}.
\]
Thus, the enriched space $V_{H,2}$ can improve the elliptic projection error from $O(\cfrac{H}{(\min\{\lambda_{L_{i}+1}^{(i)}\})^{\frac{1}{2}}})$ to $O(\cfrac{H}{(\min\{\gamma_{J_{i}+1}^{(i)}\})^{\frac{1}{2}}})$.

\bibliographystyle{abbrv}
\bibliography{references,references4,references1,references2,references3,decSol}

\end{document}